\newtheorem*{rep@theorem}{\rep@title}
\newcommand{\newreptheorem}[2]{%
\newenvironment{rep#1}[1]{%
 \def\rep@title{#2 \ref{##1}}%
 \begin{rep@theorem}}%
 {\end{rep@theorem}}}
\newtheorem{theorem}{Theorem}
\newtheorem{lemma}{Lemma}
\newtheorem{corollary}{Corollary}
\begin{document}

\def\spacingset#1{\renewcommand{\baselinestretch}%
{#1}\small\normalsize} \spacingset{1}


\title{Strong Consistency of Sparse K-means Clustering}
\date{} 
\author[1]{JeungJu Kim}
\author[1]{Johan Lim}
\affil[1]{Department of Statistics, Seoul National University, Seoul, Korea} 

  \maketitle

\begin{abstract}
\noindent  In this paper, we study the strong consistency of the sparse K-means clustering for high dimensional data. 
We prove the consistency in both risk and clustering for the Euclidean distance. We discuss the characterization 
of the limit of the clustering under some special cases. For the general (non-Euclidean) distance, we prove the consistency in risk. 
Our result naturally extends to other models with the same objective function but 
different constraints such as $\ell_0$ or $\ell_1$ penalty in recent literature.

\medskip 
\noindent%
{\bf Keywords:}  empirical risk minimization; Euclidean distance; general distance; sparse K-means clustering; strong consistency. 
\end{abstract}

\baselineskip 18pt

\section{Introduction}
\label{sec:intro}

 K-means clustering is a widely used method for clustering. However, in high-dimensional settings, the standard K-means procedure performs poorly 
 due to the presence of many irrelevant features. These features can obscure the true clusters by adding noise to the clustering process. To address this problem, 
 various techniques have been introduced to cluster high-dimensional data. 
 One such method, sparse K-means by \citet{witten:2010} has become a popular benchmark for high-dimensional clustering.

The sparse K-means clustering by \cite{witten:2010} selects features and performs clustering simultaneously. They formulated an optimization problem as 
    \begin{equation} \label{orig sparse kmeans}
    \begin{aligned}
        \max_{\mathbf{w}, C_1, \ldots, C_K} \quad & \sum_{j=1}^p w_j \biggl( \frac{1}{n} \sum_{i=1}^n \sum_{i'=1}^n d_{i,i',j}-\sum_{k=1}^K \frac{1}{n_k} \sum_{i, i' \in C_k} d_{i, i',j} \biggr) \\
        \textrm{s.t.} \quad & \|\mathbf{w}\|_2^2 \leq 1, \quad \|\mathbf{w}\|_1 \leq s, ~ \quad w_j \geq 0, \forall j
    \end{aligned},
    \end{equation}
 where $C_1, \ldots C_K$ are $K$ partitions of the data and $\mathbf{w} \in \mathbb{R}^p$ is a $p$-dimensional weight vector. The objective function is 
 a weighted between cluster sum of squares (BCSS) and $s$ is a tuning parameter to adjust the degree of sparsity. They then proposed a coordinate 
 descent algorithm which iteratively solves \eqref{orig sparse kmeans} by fixing partition and optimizing for the weight and vice versa. 
 The reason for 
 optimizing a weighted version of BCSS is quite intuitive. We may think of the weight as a coordinate-wise scale 
 transformation. Since different variables
  in the original data do not necessarily represent the right scale for clustering, it is reasonable to alter 
  the data to become more suitable for clustering.

Despite the success of the sparse K-means clustering, we know little about its theoretical properties. 
\citet{chakraborty:2020} suggested a strongly consistent lasso-weighted K-means clustering and a coordinate descent algorithm to solve it. 
While the consistency property of their estimator has been established by extending the work of \citet{pollard:1981}, 
the estimator requires three different hyperparameters, $\lambda, \alpha, \beta$, making it hard to 
implement and interpret its results. Moreover, the proof technique therein does not simply carry over to the sparse 
K-means method since the objective function of the latter is formulated in terms of the pairwise distance and is based on 
BCSS as opposed to the within-cluster sum of squares (WCSS) of the former.

In this paper, we aim to bridge this gap by showing the strong consistency of the center of sparse K-means when 
the distance is the squared Euclidean distance, which is commonly used in K-means clustering. i.e., $d_{i,i',j} = (X_{ij}-X_{i'j})^2$ in \eqref{orig sparse kmeans}. 
In addition, we show that the population version optimizer properly selects the relevant features by assuming a two-component uniform distribution. 
If non-Euclidean distance is used in clustering, the equivalence between centroid-based clustering (the clustering with WCSS) and partition-based clustering (the 
clustering with BCSS)  is 
not true anymore. However, for this case, we still show risk consistency results.

To prove the strong consistency of sparse K-means, we first alter the problem \eqref{orig sparse kmeans} into the centroid-based formulation. This equivalence was also utilized in the seminal paper by \citet{pollard:1981}, who showed the strong consistency of K-means clustering. Then, we cast this problem in the framework of an empirical risk minimization (ERM) problem, or equivalently M-estimation in the literature. 
Using empirical process theory, we prove the consistency in risk, and further prove its strong consistency by showing 
the continuity of the risk function. When non-Euclidean distance is used in sparse K-means clustering, 
the equivalence is no longer present and we have to deal with partitions itself instead of centroids. 
Still, by exploiting the concentration property of U-statistics in BCSS, 
which has been explored in \cite{clemencon:2014, li:2021}, we prove strong consistency in risk.

In the remainder of the paper, we state our main results in Section \ref{sec:main} and provide their proofs in Appendix. 
We conclude the paper in Section \ref{sec:discussion} with discussions on cluster consistency for 
the case of general distance. 

\section{Main results}\label{sec:main}

\subsection{Notations and Assumptions}      
   
 We denote by $\|\mathbf{x}\|_{\mathbf{w}}^2 = \sum_{j=1}^p w_j x_j^2$ for $\mathbf{x},\mathbf{w} \in \mathbb{R}^p$. 
    We call the following problem as the centroid-based formulation and it is of our main interest. 
    \begin{equation} \label{centroid sparse kmeans}
    \begin{aligned}
        \max_{\mathbf{w} \in \mathbb{R}^p, A \subset \mathbb{R}^p, \#A = K} \quad & \frac{1}{n} \sum_{i=1}^n (\|X_i-\bar{X}\|_\mathbf{w}^2 - \min_{a \in A} \|X_i-a\|_\mathbf{w}^2) \\
        \textrm{s.t.} \quad \qquad & \|\mathbf{w}\|_2^2 \leq 1, \quad \|\mathbf{w}\|_1 \leq s, \quad w_j \geq 0, \forall j
    \end{aligned}
    \end{equation}
    For the population version of the above formulation, we consider
    \begin{equation} \label{population sparse kmeans}
    \begin{aligned}
        \max_{\mathbf{w} \in \mathbb{R}^p, A \subset \mathbb{R}^p, \#A = K} \quad & \mathbb{E} \left[ \|X - \mu \|_\mathbf{w}^2 - \min_{a \in A} \|X-a\|_\mathbf{w}^2 \right] \\
        \textrm{s.t.} \quad \qquad & \|\mathbf{w}\|_2^2 \leq 1, \quad \|\mathbf{w}\|_1 \leq s, \quad w_j \geq 0, \forall j,
    \end{aligned}
    \end{equation}
    where $\mu = \mathbb{E}[X] \in \mathbb{R}^p$ is the mean of random vector $X$. We let the negated value of the objective of \eqref{population sparse kmeans} as $R(\mathbf{w},A)$, the clustering risk. The corresponding empirical risk is denoted by $R_n(\mathbf{w},A) = -\frac{1}{n} \sum_{i=1}^n (\|X_i-\mu\|_\mathbf{w}^2 - \min_{a \in A} \|X_i-a\|_\mathbf{w}^2)$. Note the slight difference of $\mu$ and $\bar{X}$ between $R_n$ and the objective function of \eqref{centroid sparse kmeans}. We denote by $R_n'(\mathbf{w},A)$ the negative value of the objective function of \eqref{centroid sparse kmeans}.

 Throughout the paper, we make use of two assumptions, which are presented below. 
 \begin{itemize} 
 \item[(A1)] $X$ has a compact support : $\exists M$ such that $|X_j| \le M$ a.s. for all $1 \le j \le p$.
 \item[(A2)] The optimal solution $\theta^* = (\mathbf{w}^*, A^*)$ of \eqref{population sparse kmeans} is unique. 
 \end{itemize} 
 We remark that the compact support assumption (A1) is quite common in the clustering and vector quantization literature \citep{bartlett:1998, levrard:2013, chakraborty:2023}. However, one should note that we are putting a milder condition than $||X|| \le M$ a.s. which more commonly appears in the literature. This assumption is crucial in our analysis since it facilitates the use of empirical process theory. Also, (A2) is important for proving the strong consistency since the convergence 
 notion may be obscure if there exists more than two minimizers. We remark that our risk consistency result, which may be of independent interest, 
 does not require (A2).

\subsection{Consistency for Euclidean distance} 

 We begin by establishing the equivalence between \eqref{orig sparse kmeans} and \eqref{centroid sparse kmeans} through the following lemma, where the proof is in the next section.
    \begin{lemma} \label{lem:1}
        The optimal values of \eqref{orig sparse kmeans} and \eqref{centroid sparse kmeans} are the same when $d_{i,i',j}=(X_{ij}-X_{i'j})^2$.
    \end{lemma}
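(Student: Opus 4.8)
The plan is to fix an arbitrary feasible weight $\mathbf{w}$, reduce the inner optimization in each formulation to the same quantity, and then observe that the outer feasible set for $\mathbf{w}$ is identical in \eqref{orig sparse kmeans} and \eqref{centroid sparse kmeans}; the two optimal values then agree once the leading constants are matched. The workhorse is the ANOVA-type identity, valid for any index set $S$ of size $m$ with coordinatewise mean $\bar{X}_S$,
\[
\tfrac{1}{m}\sum_{i,i'\in S}(X_{ij}-X_{i'j})^2 \;=\; 2\sum_{i\in S}(X_{ij}-\bar{X}_{S,j})^2 ,
\]
which I would verify by expanding the square and using $\sum_{i,i'\in S}X_{ij}X_{i'j}=(\sum_{i\in S}X_{ij})^2$.

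Applying this identity with $S=\{1,\dots,n\}$ to the total term and with $S=C_k$ to each cluster term, and then summing against $w_j$, the objective of \eqref{orig sparse kmeans} becomes $2\bigl(\sum_i\|X_i-\bar{X}\|_{\mathbf{w}}^2-\sum_k\sum_{i\in C_k}\|X_i-\bar{X}_k\|_{\mathbf{w}}^2\bigr)$, where $\bar{X}_k$ is the coordinatewise mean of cluster $C_k$. Hence, for fixed $\mathbf{w}$, maximizing \eqref{orig sparse kmeans} over partitions is the same as minimizing the weighted within-cluster sum of squares $\sum_k\sum_{i\in C_k}\|X_i-\bar{X}_k\|_{\mathbf{w}}^2$ over partitions, the centers being the cluster means.

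For \eqref{centroid sparse kmeans} I would invoke the two standard inner-optimality facts. Because $w_j\ge 0$ makes $\|X_i-a\|_{\mathbf{w}}^2$ separate coordinatewise, for a fixed partition the center minimizing $\sum_{i\in C_k}\|X_i-a\|_{\mathbf{w}}^2$ is exactly $a=\bar{X}_k$; and for fixed centers $A$ the term $\sum_i\min_{a\in A}\|X_i-a\|_{\mathbf{w}}^2$ is, by definition, the value of the nearest-center partition. Writing $W(\mathcal{C},A)=\sum_k\sum_{i\in C_k}\|X_i-a_k\|_{\mathbf{w}}^2$ and using the trivial interchange $\min_A\min_{\mathcal{C}}W=\min_{\mathcal{C}}\min_A W$, these two facts give $\min_A\sum_i\min_{a\in A}\|X_i-a\|_{\mathbf{w}}^2=\min_{\mathcal{C}}\sum_k\sum_{i\in C_k}\|X_i-\bar{X}_k\|_{\mathbf{w}}^2$. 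Thus the inner optimum of \eqref{centroid sparse kmeans} also equals the weighted total sum of squares minus the minimal weighted within-cluster sum of squares, i.e. the same functional of $\mathbf{w}$ obtained above. Since both problems reduce to maximizing this functional over the identical constraint set $\{\|\mathbf{w}\|_2^2\le 1,\ \|\mathbf{w}\|_1\le s,\ \mathbf{w}\ge 0\}$, they share the same maximizers, and the optimal values coincide once the leading constants are reconciled.

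The argument is essentially routine; the only points that need care are establishing the two inner-optimality facts and handling degenerate configurations — ties in the nearest-center rule and partitions with empty parts, which can only increase the within-cluster sum and so never affect the optimum — together with matching the normalization constants between the pairwise and centroid objectives. I therefore expect no genuine obstacle beyond this bookkeeping.
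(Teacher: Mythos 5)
Your proposal is correct and follows essentially the same route as the paper: the ANOVA identity $\tfrac{1}{m}\sum_{i,i'\in S}(X_{ij}-X_{i'j})^2=2\sum_{i\in S}(X_{ij}-\bar X_{S,j})^2$ converts the pairwise objective to total-minus-within sums of squares, and the equivalence with the centroid formulation is the standard two-sided argument (cluster means are optimal centers for a fixed partition; nearest-center assignment is optimal for fixed centers), which the paper writes as a pair of inequalities and you phrase as a $\min$--$\min$ interchange. You are in fact slightly more careful than the paper about the multiplicative normalization constant ($2n$) relating the two optimal values.
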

    We denote by $\hat{\theta} = ({\bf \hat{w}}, \hat{A})$ the optimal solution of (\ref{centroid sparse kmeans}). We remark that the conversion from the solution 
    of \eqref{orig sparse kmeans} to that of \eqref{centroid sparse kmeans} is very straightforward; the latter naturally emerges during the process of running the algorithm, and the exact formula can be found in the proof. Now, we derive the risk consistency result.
    \begin{theorem} \label{thm:1}
        Under (A1), with probability at least $1-3t$,
	\[
	R(\hat{\theta}) - R(\theta^*) \le 4RC + 8sM^2\sqrt{\frac{2\log(1/t)}{n}} + 2sM^2 \frac{\log(p/t)}{n},
	\]
        where 
        \[
        RC \le \sqrt{\frac{2}{n}} sM^2 \left( \sqrt{K} + 5K \right)
        \]
    \end{theorem}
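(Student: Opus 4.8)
The plan is to read \eqref{centroid sparse kmeans} as an empirical risk minimization problem and to bound the excess risk $R(\hat{\theta}) - R(\theta^*)$ by uniform deviations, carefully separating the genuine empirical-process fluctuation of $R-R_n$ from the bias introduced by centering the empirical objective at $\bar{X}$ rather than at $\mu$. First I would use that $\hat{\theta}$ minimizes $R_n'$ while $\theta^*$ minimizes $R$, together with $R_n'(\hat\theta)\le R_n'(\theta^*)$, to obtain
\begin{align*}
R(\hat{\theta}) - R(\theta^*) &\le \big(R(\hat{\theta}) - R_n(\hat{\theta})\big) + \big(R_n(\theta^*) - R(\theta^*)\big) \\
&\quad + \big(R_n(\hat{\theta}) - R_n'(\hat{\theta})\big) + \big(R_n'(\theta^*) - R_n(\theta^*)\big).
\end{align*}
The first two summands are controlled by the one-sided suprema $\sup_\theta\big(R(\theta)-R_n(\theta)\big)$ and $\sup_\theta\big(R_n(\theta)-R(\theta)\big)$. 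For the last two, a direct expansion shows that the $\min_{a\in A}$ terms cancel and $R_n(\theta)-R_n'(\theta) = -\|\bar{X}-\mu\|_{\mathbf{w}}^2$; hence the $\hat{\theta}$ term is nonpositive and may be dropped, while the $\theta^*$ term equals $\|\bar{X}-\mu\|_{\mathbf{w}^*}^2$. Bounding $\sup_{\mathbf{w}}\|\bar{X}-\mu\|_{\mathbf{w}}^2 \le s\max_j(\bar{X}_j-\mu_j)^2$ and applying Hoeffding's inequality under (A1) with a union bound over the $p$ coordinates yields the $2sM^2\log(p/t)/n$ term on one event of probability at least $1-t$.

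Next I would control the two one-sided deviations. Using (A1) I would argue that the per-sample function $f_\theta(X)=\|X-\mu\|_{\mathbf{w}}^2-\min_{a\in A}\|X-a\|_{\mathbf{w}}^2$ satisfies $|f_\theta|\le 4sM^2$ (centers may be restricted to $[-M,M]^p$ without loss), so that replacing one observation changes $R_n$ by at most $8sM^2/n$. The bounded-differences (McDiarmid) inequality then concentrates each one-sided supremum around its mean with a $\sqrt{\log(1/t)/n}$ fluctuation; the two applications account for two further events and, after tracking constants, for the $8sM^2\sqrt{2\log(1/t)/n}$ term. Standard symmetrization bounds each expected one-sided supremum by the Rademacher complexity, producing the $4RC$ term, and a union bound over the three events gives the overall probability $1-3t$.

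The crux is bounding $RC=\mathbb{E}\sup_\theta\frac{1}{n}\sum_i\sigma_i f_\theta(X_i)$ (with $\sigma_i$ i.i.d.\ Rademacher signs) in a dimension-free manner, since the stated bound carries no $p$. The key algebraic observation is that the pure quadratic $\|X\|_{\mathbf{w}}^2$ cancels between the two pieces of $f_\theta$, leaving $f_\theta(X)=\max_{k}\big(2\langle X,a_k\rangle_{\mathbf{w}}-\|a_k\|_{\mathbf{w}}^2\big)-\big(2\langle X,\mu\rangle_{\mathbf{w}}-\|\mu\|_{\mathbf{w}}^2\big)$, in which every $X$-dependent term is \emph{linear}. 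I would then exploit the constraints: $\|a\|_{\mathbf{w}}^2\le M^2\|\mathbf{w}\|_1\le sM^2$, and, crucially, $\mathbb{E}\|\frac{1}{n}\sum_i\sigma_i X_i\|_{\mathbf{w}}\le M\sqrt{s/n}$, both of which follow from $\|\mathbf{w}\|_1\le s$ and (A1) and are free of $p$. Weighted Cauchy--Schwarz then bounds the single-center linear part dimension-freely, and I would aggregate over the $K$ centers in the maximum using a vector-contraction inequality for the $1$-Lipschitz map $\max$ together with Cauchy--Schwarz over the center index, producing the $\sqrt{K}$ and $K$ contributions with the residual $\mu$-term absorbed into the constants. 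I expect this last step --- keeping the $K$-center maximum dimension-free while tracking the constants in $\sqrt{K}+5K$ --- to be the main obstacle; the cancellation of $\|X\|_{\mathbf{w}}^2$ and the $\ell_1$-driven weighted-norm control are precisely what make a $p$-free bound possible.
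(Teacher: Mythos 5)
Your proposal is correct and follows essentially the same route as the paper's proof: the same three-part decomposition into one-sided uniform deviations plus the $\|\bar{X}-\mu\|_{\mathbf{w}}^2$ centering bias (your observation that the bias term at $\hat{\theta}$ is nonpositive is a minor constant improvement, not a different argument), McDiarmid plus symmetrization for the deviations, Hoeffding with a union bound over coordinates for the bias, and the identical Rademacher-complexity computation via cancellation of the quadratic in $X$, the Maurer vector-contraction inequality for the $K$-center maximum, and $\ell_1$-driven weighted Cauchy--Schwarz for the dimension-free constants.
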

\noindent 
Applying the Borel-Cantelli lemma shows that, if $\log(p)/n \rightarrow 0$ as $n,p \rightarrow \infty$, $R(\hat{\theta}) - R(\theta^*) \rightarrow 0$ almost surely. 
For example, for $t$ that is summable to both $n$ and $p$ (for example, $t=(np)^{-2}$), the rate of the upper bound is 
\[
4 \sqrt{\frac{2}{n}} sM^2 \left( \sqrt{K} + 5K \right) + 8sM^2\sqrt{\frac{2\log(1/t)}{n}} + 2sM^2 \frac{\log(p/t)}{n}  \sim s \max\left( \frac{\log p}{n}, \sqrt{\frac{-\log t}{n}}\right),
\]
with the assumption of fixed $K$ (if $t=(np)^{-2}$, the rate is $s \log p /n$). This means that as long as the dimension does not grow exponentially, the risk consistency result is obtained. In fact, if we put a stronger assumption on the support of $X$ such as $\exists M \text{ such that } ||X|| \le M \text{ } a.s.$, then the dimension-free risk result follows. This is in line with preexisting literature on K-means clustering by \citet{Biau:2008} where they also exploited a stronger assumption than coordinate-wise compactness.
This can be interpreted to mean that as long as the solution to the empirical risk function is found, the dimension plays little role on its performance. However, one should not be misled to believe that dimensionality does not play any role in sparse K-means clustering, as optimization usually becomes harder as dimension increases.	
    
  To derive strong consistency from the risk consistency, one may be interested in finding a sufficient condition for $R(\hat{\theta}) \rightarrow R(\theta)$ implies $\hat{\theta} \rightarrow \theta$. The condition below guarantees such property.
    \begin{equation} \label{cond:1}
    \forall \epsilon >0, \exists \eta > 0 \quad \mathrm{s.t.} \quad d(\hat{\theta}, \theta^*) \ge \epsilon \implies R(\hat{\theta}) \ge R(\theta^*) + \eta
    \end{equation}
    Under (A1) and (A2), the continuity of $\theta \mapsto R(\theta)$ is sufficient for (\ref{cond:1}) simply by taking 
    \[
    \eta = \min_{\theta : d(\theta, \theta^*) \ge \epsilon}R(\theta) - R(\theta^*),
    \]
 where the minimum is attained by the extreme value theorem \citep{rudin:1964} and $\eta > 0$, which states 
 the uniqueness of the minimizer, follows from (A2). Our next theorem states the continuity of the risk function.
    \begin{theorem} \label{thm:2}
        The map
        \[
            ({\bf w},A) \rightarrow R({\bf w}, A) = \mathbb{E} \left[ \|X - \mu\|_\mathbf{w}^2 - \min_{a \in A} \|X-a\|_\mathbf{w}^2 \right]
        \]
        is continuous, where $d(({\bf w}_1,A_1), ({\bf w}_2,A_2)) = \max \{ \|{\bf w_1}-{\bf w_2}\|, d_H(A_1, A_2)\}$, and $d_H$ denotes Hausdorff metric between two sets.
    \end{theorem}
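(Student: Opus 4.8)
The plan is to use the sequential characterization of continuity. Since the parameter space carries the metric $d$, it suffices to show that $(\mathbf{w}_n, A_n) \to (\mathbf{w}, A)$ implies $R(\mathbf{w}_n, A_n) \to R(\mathbf{w}, A)$. I would first split the risk as
\[
R(\mathbf{w}, A) = \mathbb{E}\|X-\mu\|_{\mathbf{w}}^2 - \mathbb{E}\Big[\min_{a \in A}\|X-a\|_{\mathbf{w}}^2\Big] =: L(\mathbf{w}) - g(\mathbf{w}, A).
\]
The first term is linear in $\mathbf{w}$, namely $L(\mathbf{w}) = \sum_{j=1}^p w_j\,\mathbb{E}(X_j - \mu_j)^2$, and under (A1) each coefficient satisfies $\mathbb{E}(X_j-\mu_j)^2 \le M^2$; hence $L$ is Lipschitz in $\mathbf{w}$ and independent of $A$, so it is continuous. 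The task therefore reduces to proving continuity of $g$.

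For $g$, the idea is to establish joint continuity of the integrand $x \mapsto \psi(\mathbf{w}, A; x) := \min_{a \in A}\|x-a\|_{\mathbf{w}}^2$ in $(\mathbf{w}, A)$ for each fixed $x$, and then transfer this to the expectation by dominated convergence. Write $\varphi(\mathbf{v}, a) = \|x-a\|_{\mathbf{v}}^2 = \sum_j v_j (x_j - a_j)^2$, which is jointly continuous in $(\mathbf{v}, a)$. Fix a sequence $(\mathbf{w}_n, A_n) \to (\mathbf{w}, A)$ and set $m_n = \min_{a \in A_n}\varphi(\mathbf{w}_n, a)$ and $m = \min_{a \in A}\varphi(\mathbf{w}, a)$. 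For the $\limsup$ bound, pick $a^\star \in A$ attaining $m$; since $d_H(A_n, A) \to 0$ there exist $a_n \in A_n$ with $\|a_n - a^\star\| \le d_H(A_n, A) \to 0$, so joint continuity of $\varphi$ gives $m_n \le \varphi(\mathbf{w}_n, a_n) \to \varphi(\mathbf{w}, a^\star) = m$. For the $\liminf$ bound, let $a_n \in A_n$ attain $m_n$; the sets $A_n$ lie in a bounded neighborhood of $A$ for large $n$, so along any subsequence realizing $\liminf_n m_n$ we may extract a further subsequence $a_{n_k} \to \bar a$, and $d(\bar a, A) = \lim_k d(a_{n_k}, A) \le \lim_k d_H(A_{n_k}, A) = 0$ forces $\bar a \in A$; continuity then yields $\liminf_n m_n = \lim_k \varphi(\mathbf{w}_{n_k}, a_{n_k}) = \varphi(\mathbf{w}, \bar a) \ge m$. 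Combining the two bounds gives $m_n \to m$, i.e.\ pointwise continuity of the integrand.

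To pass to the expectation, I would produce an integrable dominating function. Because $(\mathbf{w}_n, A_n)$ converges, $\sup_n \|\mathbf{w}_n\|_1 < \infty$ and the points of the $A_n$ are uniformly bounded; together with (A1) this yields $|\psi(\mathbf{w}_n, A_n; X)| \le C$ for a constant $C$ and all $n$, and a constant is integrable under the law of $X$. Dominated convergence then gives $g(\mathbf{w}_n, A_n) \to g(\mathbf{w}, A)$, and combined with the continuity of $L$ this proves that $R$ is continuous. The main obstacle is the middle step: the minimum is taken over a set $A$ that moves in the Hausdorff metric while the weighting quadratic form also varies, so neither a naive uniform-convergence argument nor differentiation is available; the two-sided $\limsup$/$\liminf$ argument, using Hausdorff convergence in one direction and compactness of the minimizers together with closedness of $A$ in the other, is what makes the pointwise limit go through.
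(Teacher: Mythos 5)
Your proposal is correct, and it reaches the conclusion by a genuinely different route from the paper. You prove \emph{joint} pointwise continuity of the integrand $x \mapsto \min_{a\in A}\|x-a\|_{\mathbf{w}}^2$ in $(\mathbf{w},A)$ directly: the $\limsup$ side uses $\overrightarrow{d_H}(A,A_n)\to 0$ to track a minimizer $a^\star\in A$ into $A_n$, and the $\liminf$ side uses boundedness of the $A_n$, a convergent subsequence of minimizers, and closedness of the finite set $A$; a single application of dominated convergence then finishes. The paper instead proves pointwise continuity only in $\mathbf{w}$ for \emph{fixed} $A$, and absorbs the perturbation of the set at the level of the integral via a ``Peter--Paul'' inequality $d^2(x,A)\le(1+\epsilon)d^2(x,B)+c_\epsilon\,\overrightarrow{d_H}^2(B,A)$, followed by reverse Fatou/Fatou and letting $\epsilon\to 0$. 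Your argument is more elementary and avoids the $\epsilon$-bookkeeping, but it leans on the minima over $A$ and $A_n$ being attained (harmless here since $\#A=K<\infty$) and on compactness of the relevant points; the paper's one-sided Hausdorff inequality decouples the two perturbations and would survive in settings where attainment is delicate. One small discrepancy: you invoke (A1) to produce the dominating constant, whereas the paper explicitly remarks that the continuity requires neither (A1) nor (A2) (it dominates by $d^2(x,A)$ for the fixed limit set $A$, implicitly using only square-integrability of $X$, which is already needed for $R$ to be finite). Replacing your constant bound by $s\cdot\max_{a\in A'}\|x-a\|^2$ over a fixed bounded superset $A'$ of $\bigcup_n A_n$ would recover the same generality.
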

\noindent The main idea of the proof is from \citet{evans:2024} and can be found in Section \ref{sec:proofs}. 
We remark that this continuity property requires neither (A1) nor (A2). 
Now that the continuity result is established, $\hat{\theta} \rightarrow \theta^*$ a.s. follows as a corollary. 
\begin{corollary} \label{cor:1}
	Under (A1) and (A2), $\hat{\theta} \rightarrow \theta^*$ a.s. as $n,p \rightarrow \infty$
\end{corollary}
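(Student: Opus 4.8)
The plan is to combine the two principal results, the risk consistency of Theorem~\ref{thm:1} and the continuity of Theorem~\ref{thm:2}, glued together by the deterministic implication~\eqref{cond:1}. First I would record the only probabilistic input: Theorem~\ref{thm:1}, together with the Borel--Cantelli argument already sketched after its statement, yields $R(\hat{\theta}) - R(\theta^*) \to 0$ almost surely whenever $\log(p)/n \to 0$. Everything downstream is a deterministic analysis of the map $R$.

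Next I would verify that condition~\eqref{cond:1} genuinely holds, which is where Theorem~\ref{thm:2} enters. Fix $\epsilon > 0$ and set $\eta = \inf\{R(\theta) - R(\theta^*) : d(\theta, \theta^*) \ge \epsilon\}$. To turn this infimum into an attained minimum I must restrict to a compact domain. The weight component already ranges over the compact set $\{\mathbf{w} : \|\mathbf{w}\|_2 \le 1, \|\mathbf{w}\|_1 \le s, w_j \ge 0\}$. For the center component I would invoke (A1): since $X$ is supported in $[-M,M]^p$ almost surely, replacing any candidate center by its coordinatewise projection onto $[-M,M]^p$ cannot increase $\|X-a\|_{\mathbf{w}}^2$ and hence cannot increase the risk, so the search over $A$ may be confined to $K$-point subsets of $[-M,M]^p$. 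On this compact domain the set $\{\theta : d(\theta,\theta^*) \ge \epsilon\}$ is compact, so by Theorem~\ref{thm:2} and the extreme value theorem the defining infimum is attained at some $\theta_\epsilon$. Assumption (A2) forces $\theta_\epsilon \ne \theta^*$, and uniqueness of the minimizer gives $R(\theta_\epsilon) > R(\theta^*)$, that is $\eta > 0$; this is exactly~\eqref{cond:1}.

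Finally I would conclude by contraposition. For the $\epsilon$ and $\eta$ just produced, the event $R(\hat{\theta}) - R(\theta^*) < \eta$ implies $d(\hat{\theta},\theta^*) < \epsilon$. Since $R(\hat{\theta}) - R(\theta^*) \to 0$ almost surely from the first step, the inequality $R(\hat{\theta}) - R(\theta^*) < \eta$ holds for all large $n$ on a probability-one event, so $d(\hat{\theta},\theta^*) < \epsilon$ eventually. As $\epsilon > 0$ was arbitrary, $d(\hat{\theta}, \theta^*) \to 0$ almost surely, which is the claim.

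The step I expect to be the main obstacle is the compactification of the center space and its interaction with the cardinality constraint and the growing dimension. Making ``projection onto $[-M,M]^p$ does not hurt the risk'' rigorous requires care with the Hausdorff metric and the constraint $\#A = K$: the space of at-most-$K$-point subsets of $[-M,M]^p$ is Hausdorff-compact, but exactly-$K$-point configurations are not closed (two centers may collapse in the limit), so one must argue that this collapse cannot occur at or near the optimizing configuration. More delicate still is reconciling the finite-dimensional extreme value theorem and continuity statements with the regime $p \to \infty$ appearing in the corollary; one either fixes $p$ for the continuity step or supplies a uniform-in-$p$ control of the modulus $\eta(\epsilon)$, and I would flag this as the point demanding the most attention.
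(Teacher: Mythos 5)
Your proposal is correct and follows essentially the same route as the paper: the paper's proof of Corollary \ref{cor:1} is simply ``immediate from Theorem \ref{thm:1} and Theorem \ref{thm:2},'' relying on exactly the argument you spell out (Borel--Cantelli for a.s.\ risk consistency, then the extreme-value-theorem construction of $\eta$ via continuity and (A2) to get condition \eqref{cond:1}). The technical caveats you flag at the end --- compactification of the center space, possible collapse of centers, and uniformity in $p$ as $p \to \infty$ --- are genuine subtleties that the paper's terse proof does not address, so your more careful treatment is a strict refinement rather than a different approach.
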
 
\begin{proof}
The proof is immediate from Theorem \ref{thm:1} and Theorem \ref{thm:2}. 
\end{proof}
 
 \subsection{Consistency for general distance}
	
If data are not generated from Euclidean space anymore, we can no longer reformulate \eqref{orig sparse kmeans} into  
\eqref{centroid sparse kmeans}. Thus, we have to deal with random partition $C_1, \ldots, C_K$ itself instead of more tractable $K$ 
points $a_1, \ldots, a_K$. In this section, we prove a risk consistency result for the sparse K-means clustering for this 
general distance case.

First, we define 
\[
\Pi = \big\{\{C_1, \ldots, C_K\} : \cup_{i=1}^K C_i = \mathcal{X}, C_i \cap C_j = \emptyset, \forall i \ne j  \big\},
\] 
a collection of $K$-partitions whose union forms $\mathcal{X}$. Further regularity condition shall be put on $\Pi$. 
\begin{itemize} 
\item[(A3)] $\exists \delta>0 \text{ s.t. } \forall \{C_1, \ldots, C_K\} \in \Pi, \min_{1 \le i \le K}P(C_i) \ge \delta$
\end{itemize}
This assumption is, in general, hard to verify empirically since we do not have information about the probability measure. 
We remark that this can be replaced by more general assumptions such as 
\begin{itemize} 
\item[(A4)] $X_1, \ldots, X_n$ are continuously distributed with pdf $f$ and on compact support $\mathcal{X}$, $f>0$.
\item[(A5)] $\exists \delta>0 \text{ s.t. } \forall \{C_1, \ldots, C_K\} \in \Pi, \min_{1 \le i \le K}\text{vol}(C_i) \ge \delta$. 
\end{itemize}
Also, since there is no notion of coordinates in general metric space, we relax the coordinate-wise compact support assumption (A1) as
\begin{itemize}  
\item[($\mbox{A1}^{\prime}$)] The diameter of $\mathcal{X}$ is bounded by $M<\infty$. 
\end{itemize} 
Lastly, the population problem is defined as
    \begin{equation} \label{general population sparse kmeans}
    \begin{aligned}
        \max_{\mathbf{w}, C_1, \ldots, C_K} \quad & \sum_{j=1}^p w_j \left( \mathbb{E}d_j(X_1, X_2) -\sum_{k=1}^K \frac{1}{P(C_k)} \mathbb{E}\left[d_j(X_1, X_2)I\{(X_1, X_2) \in C_k^2\} \right] \right) \\
        \textrm{s.t.} \quad & \|\mathbf{w}\|_2^2 \leq 1, \quad \|\mathbf{w}\|_1 \leq s, \quad w_j \geq 0, \forall j,
    \end{aligned}
    \end{equation}
  and as before, the objective function is denoted by $R(\mathbf{w}, (C_1, \ldots, C_K))$, the risk.

Under these conditions, the following theorem is derived.
\begin{theorem}\label{thm:4}
	Let $\hat{\theta}= (\hat{\bf{w}}, \{\hat{C_1}, \ldots, \hat{C_K}\})$ denote the minimizer of (\ref{orig sparse kmeans}) over $\mathcal{F} \times \Pi$, where $\mathcal{F} = \{ \mathbf{w} \in \mathbb{R}^p : ||\mathbf{w}||_2^2 \leq 1, ||\mathbf{w}||_1 \leq s, w_j \geq 0, \forall j \}$. Also denote by $\theta^* = (\mathbf{w}^*, \{C_1^*, \ldots, C_K^*\})$ the minimizer of corresponding population problem (\ref{general population sparse kmeans}) over $\mathcal{F} \times \Pi$. Assume ($\mbox{A1}^{\prime}$) and (A3). Then, with probability at least $1-4pt$, 
\begin{eqnarray} 
	R(\hat{\theta}) - R(\theta^*) &\le&  2sM \sqrt{\frac{2}{n}\log(1/t)} + \frac{4sKM}{\delta^2} \left(2RC+ \sqrt{\frac{2}{n}\log(1/t)}\right) \nonumber\\
	&& \qquad  \qquad +  \frac{2sK}{\delta}\left( 2 \max_{1 \le j \le p}RC_j + M\sqrt{\frac{2}{n}\log(1/t)} \right) \nonumber 
\end{eqnarray} 
provided that $2RC+\sqrt{\frac{2}{n}\log{1/t}} \le \frac{\delta}{2}$, where
\begin{eqnarray}
	RC &=& \mathbb{E}\sup_{C \in \mathcal{P}, \mathcal{P} \in \Pi}\frac{1}{n} \left|\sum_{i=1}^n \epsilon_i \mathbb{I}(X_i \in C) \right| \nonumber\\
	RC_j &=& \mathbb{E}\sup_{C \in \mathcal{P}, \mathcal{P} \in \Pi}\frac{1}{\lfloor n/2 \rfloor} \left| \sum_{i=1}^{\lfloor n/2 \rfloor} \epsilon_i d_j(X_i, X_{i+\lfloor n/2 \rfloor}) \mathbb{I}((X_i,X_{i+\lfloor n/2 \rfloor}) \in C^2) \right|. \nonumber
	\end{eqnarray} 
\end{theorem}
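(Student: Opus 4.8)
The plan is to run the standard empirical-risk-minimization argument: bound the excess risk by twice a uniform deviation between the empirical and population objectives, then control that deviation feature by feature via symmetrization. Writing $R_n$ for the (negated) empirical objective of \eqref{orig sparse kmeans} and using that $\hat{\theta}$ minimizes $R_n$ while $\theta^*$ minimizes $R$, the usual sandwich
\[
R(\hat{\theta}) - R(\theta^*) \le [R(\hat{\theta}) - R_n(\hat{\theta})] + [R_n(\theta^*) - R(\theta^*)] \le 2 \sup_{(\mathbf{w},\mathcal{C}) \in \mathcal{F} \times \Pi} \left| R_n(\mathbf{w},\mathcal{C}) - R(\mathbf{w},\mathcal{C}) \right|
\]
reduces everything to a uniform deviation bound. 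Since the objective is linear in $\mathbf{w}$ with $w_j \ge 0$ and $\|\mathbf{w}\|_1 \le s$, it suffices to control the per-feature deviation uniformly over partitions and then pay a factor $s$ by H\"older; together with $d_j \le M$ (from ($\mbox{A1}'$)) this explains the leading $s$ and the $M$ appearing in every term.

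For a fixed feature $j$ the per-feature summand is $\mathbb{E}d_j(X_1,X_2) - \sum_{k=1}^K B_{j,k}/P(C_k)$, where $B_{j,k} = \mathbb{E}[d_j(X_1,X_2)\mathbb{I}\{(X_1,X_2)\in C_k^2\}]$, with empirical counterparts $\hat{A}_j$, $\hat{B}_{j,k}$ and $\hat{P}(C_k)$. The first term is partition-free, so $|\hat{A}_j - \mathbb{E}d_j|$ is a single $U$-statistic deviation handled by a bounded-differences inequality; after the $s$ factor this yields the term $2sM\sqrt{(2/n)\log(1/t)}$. For the ratio terms I would split
\[
\frac{\hat{B}_{j,k}}{\hat{P}(C_k)} - \frac{B_{j,k}}{P(C_k)} = \frac{\hat{B}_{j,k} - B_{j,k}}{P(C_k)} + \hat{B}_{j,k}\cdot \frac{P(C_k)-\hat{P}(C_k)}{P(C_k)\hat{P}(C_k)}.
\]
Under (A3) we have $P(C_k)\ge\delta$; the numerator deviation $\sup_{C}|\hat{B}_{j,k}(C)-B_{j,k}(C)|$ will be controlled by $RC_j$, producing—after the factors $K$ (sum over clusters), $s$, $1/\delta$ and the doubling—the term $(2sK/\delta)(2\max_j RC_j + M\sqrt{(2/n)\log(1/t)})$. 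The second piece uses $|\hat{B}_{j,k}|\le M$ together with the lower bound on $\hat{P}(C_k)$ and the uniform control of $\sup_C|\hat{P}(C)-P(C)|$ by $RC$, giving the $1/\delta^2$ term $(4sKM/\delta^2)(2RC+\sqrt{(2/n)\log(1/t)})$.

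Two ingredients then remain. First, I must lower-bound the empirical cluster masses: symmetrization gives $\mathbb{E}\sup_C|\hat{P}(C)-P(C)|\le 2RC$ and a bounded-differences inequality adds the fluctuation $\sqrt{(2/n)\log(1/t)}$, so on the stated event $2RC+\sqrt{(2/n)\log(1/t)}\le\delta/2$ one gets $\hat{P}(C_k)\ge\delta/2$; this is exactly what makes $1/(P(C_k)\hat{P}(C_k))\le 2/\delta^2$ and legitimizes the proviso in the statement. Second, and this is the main obstacle, $\hat{B}_{j,k}$ is a genuine $U$-statistic, so naive symmetrization over i.i.d.\ summands does not apply directly. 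I would invoke the Hoeffding representation that writes a $U$-statistic as an average over permutations of sums of $\lfloor n/2 \rfloor$ \emph{independent} pairs; applying symmetrization to each such decoupled block and then Jensen's inequality to pull the supremum back out reduces $\sup_C|\hat{B}_{j,k}(C)-B_{j,k}(C)|$ to $2RC_j$, which is precisely the quantity in the theorem and matches the $\lfloor n/2 \rfloor$ structure in its definition. This is the step where I lean on the $U$-statistic concentration machinery of \cite{clemencon:2014, li:2021}.

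Finally, assembling the pieces requires a union bound: the deviations $\hat{A}_j$ and $\sup_C|\hat{B}_{j,k}(C)-B_{j,k}(C)|$ depend on the feature $j$, and to obtain $\max_{1\le j\le p}RC_j$ uniformly I would union over the $p$ coordinates, each carrying a constant number of concentration events, which accounts for the confidence $1-4pt$. Collecting the three contributions—the baseline $\hat{A}_j$ term, the numerator term scaled by $1/\delta$, and the denominator term scaled by $1/\delta^2$—then yields the stated bound, valid on the event where the empirical cluster probabilities remain above $\delta/2$.
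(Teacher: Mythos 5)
Your proposal is correct and follows essentially the same route as the paper's proof: the ERM sandwich reducing to a uniform deviation, the per-feature decomposition paying a factor $s$ via $\|\mathbf{w}\|_1\le s$, the ratio split into a numerator term (scaled by $1/\delta$) and a denominator term (scaled by $2/\delta^2$ under the proviso, which is the paper's Lemma \ref{lem:5}), the Hoeffding-representation treatment of the $U$-process that produces the $\lfloor n/2\rfloor$ blocks in $RC_j$ (the paper delegates this to Lemma 6 of \citet{clemencon:2014}), and the union bound over the $p$ features yielding $1-4pt$. No gaps worth flagging.
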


Naturally, in our analysis, the concentration of U-statistics is taken into account due to the BCSS part in our clustering criterion. This idea of applying U-process theories into clustering is well explored in \cite{clemencon:2014}, and our proof rests on it.

We make a few remarks on this theorem. First, the result of this theorem can be restated as (omitting constants in the $n, p \rightarrow \infty$ regime),
\[
	R(\hat{\theta}) - R(\theta^*) \lesssim \sqrt{\frac{1}{n}\log(p/t)} + RC + \max_{1 \le j \le p} RC_j
\]
with probability at least $1-t$. Therefore, in the $(n,p)$ regime where
\[
	\frac{\log{p}}{n}, RC, \max_{1 \le j \le p}RC_j \rightarrow 0, \text{ as } n, p \rightarrow \infty,
\]
the risk consistency holds true by the first Borel-Cantelli lemma. Since it is, in general, hard to directly evaluate $RC$, we present a simple corollary that links the concept of the Vapnik-Chervonenkis (VC) dimension to the Rademacher complexity.
\begin{corollary}\label{cor:2}
	Suppose the VC dimension of $\mathcal{A} = \{C \subset \mathcal{X} \mid C \in \mathcal{P}, \mathcal{P} \in \Pi \}$ is $v$, which may depend on $p$. Then, $RC$ and $\max_{1 \le j \le p} RC_j$ are of order $O(\sqrt{\frac{v}{n}})$. Consequently, the solution to \eqref{orig sparse kmeans} is risk consistent as long as $\frac{\max(\log p, v)}{n} \rightarrow 0$ as $n,p \rightarrow \infty$.
\end{corollary}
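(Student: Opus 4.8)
The plan is to bound both $RC$ and $\max_j RC_j$ by passing from the combinatorial VC complexity of $\mathcal{A}$ to its Rademacher complexity via the Sauer--Shelah lemma together with a chaining (Dudley entropy integral) argument, and then to handle the extra multiplicative weight $d_j$ appearing in $RC_j$ by a covering-number comparison. Once both quantities are shown to be $O(\sqrt{v/n})$, the consistency claim will follow by substitution into Theorem~\ref{thm:4}.

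For $RC$, I would first observe that the relevant function class is the set of indicators $\{x \mapsto \mathbb{I}(x \in C) : C \in \mathcal{A}\}$, a class of $\{0,1\}$-valued functions whose VC dimension is $v$ by hypothesis. By Sauer--Shelah its growth function on $n$ points is at most $(en/v)^v$, and Haussler's bound controls the $L^2(P_n)$ covering numbers uniformly as $N(\varepsilon) \le (C/\varepsilon)^{Cv}$. Feeding this into Dudley's entropy integral yields
\[
RC \le \frac{C}{\sqrt{n}}\int_0^1 \sqrt{v\log(C/\varepsilon)}\,d\varepsilon = O\!\left(\sqrt{\tfrac{v}{n}}\right),
\]
which is the desired order. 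A softer Massart finite-class bound using only Sauer--Shelah would give $O(\sqrt{v\log(n/v)/n})$, still sufficient for the consistency conclusion, but chaining removes the logarithmic factor.

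For $RC_j$ the main point is the product structure of the pairs. Since $\mathbb{I}((x,y)\in C^2)=\mathbb{I}(x\in C)\,\mathbb{I}(y\in C)$, shattering $m$ pairs $(x_i,y_i)$ by sets of the form $C^2$ forces $C$ to realize the corresponding labelings on the $2m$ coordinates $x_1,\dots,x_m,y_1,\dots,y_m$; hence the growth function of $\{C^2 : C\in\mathcal{A}\}$ on $\mathcal{X}^2$ at $m$ points is at most the growth function of $\mathcal{A}$ at $2m$ points, so this product class again has VC dimension $O(v)$. Treating the $\lfloor n/2\rfloor$ pairs $(X_i,X_{i+\lfloor n/2\rfloor})$ as i.i.d.\ observations (the decoupling that justifies the U-statistic form), $RC_j$ is the Rademacher complexity of $\{(x,y)\mapsto d_j(x,y)\,\mathbb{I}((x,y)\in C^2)\}$ over $\lfloor n/2\rfloor$ samples. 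Because $d_j$ is a fixed function bounded by $M$ under (A1$'$) and does not depend on $C$, one may absorb $\mathrm{sign}(d_j)$ into the Rademacher signs and use $|d_j|\le M$; equivalently, the $L^2(P_n)$ covering numbers of the weighted class at scale $M\varepsilon$ are dominated by those of $\{\mathbb{I}_{C^2}\}$ at scale $\varepsilon$. The same Dudley argument then gives $RC_j = O(M\sqrt{v/\lfloor n/2\rfloor}) = O(\sqrt{v/n})$, with a bound uniform in $j$, so $\max_{j} RC_j = O(\sqrt{v/n})$ as well.

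Finally, I would substitute these two estimates into the restated conclusion of Theorem~\ref{thm:4}, namely $R(\hat\theta)-R(\theta^*)\lesssim \sqrt{\log(p/t)/n}+RC+\max_j RC_j$, to obtain $R(\hat\theta)-R(\theta^*)\lesssim \sqrt{\log(p/t)/n}+\sqrt{v/n}$, which tends to zero precisely when $\max(\log p, v)/n\to 0$; choosing $t$ summable and invoking the first Borel--Cantelli lemma upgrades this to almost-sure risk consistency. I expect the main obstacle to be the $RC_j$ term, where one must simultaneously verify that the product class $\{C^2\}$ inherits VC dimension $O(v)$ rather than something larger, and control the multiplicative weight $d_j$ without losing the rate, since a naive treatment would introduce either an unwanted logarithmic factor or a spurious dependence on $p$ through the number of coordinates.
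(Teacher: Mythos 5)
Your proposal is correct and follows essentially the same route as the paper's own (outlined) proof: reduce $RC$ and $RC_j$ to covering-number bounds of order $(c_1/\epsilon)^{c_2 v}$ via the VC dimension of $\mathcal{A}$, feed these into Dudley's entropy integral to get $O(\sqrt{v/n})$, and substitute into Theorem~\ref{thm:4}. Your added details --- the growth-function argument showing $\{C^2 : C \in \mathcal{A}\}$ has VC dimension $O(v)$ and the scale-$M\varepsilon$ covering comparison absorbing the fixed envelope $d_j$ --- simply make explicit two steps the paper asserts without proof.
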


\begin{proof}
Here, we present an outline of the proof as bounding the Rademacher Complexity using VC dimension is quite a standard technique (for example, see Example 5.24 of \citet{wainwright:2019}). 
Note that for each $j$, $\mathcal{F}_j = \{d_j(\cdot, \cdot)\mathbb{I}((\cdot,\cdot) \in C^2): C \in \mathcal{P}, \mathcal{P} \in \Pi \}$ has the same VC subgraph dimension as the VC dimension of $\mathcal{A}$. These classes all share the same envelope function $d(\cdot, \cdot)$ and the covering number is bounded by 
\[
	N(\epsilon; \mathcal{F}_j, ||\cdot||_{\mathbb{P}_n}) \le \left( \frac{c_1}{\epsilon} \right)^{c_2v}
\]
for some universal constants $c_1, c_2 > 0$ that are independent of $j$. Finally, plugging this estimate into the following Dudley's entropy integral gives the claim. 
\[
	RC_j \lesssim \frac{1}{\sqrt{n}} \int_0^{2M} \sqrt{\log N(t;\mathcal{F}_j;||\cdot||_{\mathbb{P}_n})} dt
\]
The case for $RC$ follows in a similar way.
\end{proof}

In the particular scenario where the underlying data space is the Euclidean space and $\Pi$ is the collection of Voronoi partitions with respect to the Euclidean norm, $v = O(p)$ (Theorem 21.5 of \citet{devroye:2013}) and risk consistency holds as long as $p/n \rightarrow 0$ as $n,p \rightarrow \infty$. This partly recovers the result presented in our previous Theorem \ref{thm:1}. Nonetheless, we acknowledge that there remains a slight gap between both results as Theorem \ref{thm:1} puts a milder restriction on the order of $p$.

Finally, we remark that our analysis takes into account the normalization part $1/n_k$ present in the BCSS clustering criterion, which is essential to establish the equivalence between centroid-based clustering and partition-based clustering. This is in contrast to many of the current analyses of partition-based clustering performances \citep{clemencon:2014,li:2021}, where their frameworks do not consider the normalized objective function. 

\section{Discussion} \label{sec:discussion}

We conclude the paper with two further discussions on the results of the paper, the characterization of the cluster limit 
for Euclidean distance and the consistency in clustering for general distance.

First, let us discuss the characterization of the cluster limit of the Euclidean case, in which we are able to prove the strong consistency 
of the cluster. However, even for this case, the limit process \eqref{population sparse kmeans}
 is too complicated to directly analyze. We try to characterize the limit by assuming a two-component uniform mixture model and try to figure out if \eqref{population sparse kmeans} correctly recovers the weight and clusters. We consider a uniform distribution on the union of two balls $\bigcup_{i=1}^2 B(a_i, \sqrt{r})$, where $a_1 = (0, \ldots, 0)^t$ and $a_2 = (\overbrace{1, \ldots, 1}^{r}, 0, \ldots, 0)^t$. For this model, we prove the following theorem.
 
    \begin{theorem} \label{thm:3}
        Let $X$ be a random vector taking values in $\mathbb{R}^p$ that follows a uniform distribution on $\bigcup_{i=1}^2 B(a_i, \sqrt{r}/2)$, where $a_1 = (0, \ldots, 0)^t$ and $a_2 = (\overbrace{1, \ldots, 1}^{r}, 0, \ldots, 0)^t$. Then, $\mathbf{w} = (\overbrace{1, \ldots, 1}^{r}, 0, \ldots, 0)^t$ and $A = \{a_1, a_2\}$ is a stationary point to \eqref{population sparse kmeans}.
    \end{theorem}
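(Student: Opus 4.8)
The plan is to read ``stationary point'' in the block-coordinate sense that the coordinate-descent algorithm for \eqref{population sparse kmeans} targets, normalizing the candidate weight to $\mathbf{w} = r^{-1/2}(1,\dots,1,0,\dots,0)$ so that $\|\mathbf{w}\|_2 = 1$. Concretely I would verify two first-order conditions: (i) given $\mathbf{w}$, the centers $A=\{a_1,a_2\}$ are optimal, and (ii) given the partition that $A$ induces, $\mathbf{w}$ is a KKT point of the (linear) weight subproblem. Because the balls $B(a_1,\sqrt r/2)$ and $B(a_2,\sqrt r/2)$ are merely tangent ($\|a_1-a_2\|_2=\sqrt r$ equals the sum of radii), the weighted nearest-center assignment is unambiguous off a null set, so the objective is differentiable at the candidate and these block conditions are genuine stationarity conditions.

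The geometric input I need is that the weighted Voronoi partition separates the two balls. Since $\mathbf{w}$ is supported on the first $r$ coordinates, the Voronoi boundary between $a_1$ and $a_2$ is the hyperplane $\{\sum_{j=1}^r x_j = r/2\}$; by Cauchy--Schwarz every $x\in B(a_1,\sqrt r/2)$ obeys $\sum_{j=1}^r x_j \le \sqrt r\,\|x\|_2 \le r/2$, so $B(a_1,\sqrt r/2)$ sits in the cell of $a_1$, and symmetrically for $a_2$. For block (i), the center first-order condition for each cell is $w_j\big(a_j-\mathbb{E}[X_j\mid \text{cell}]\big)=0$ for all $j$; it holds trivially where $w_j=0$ and, where $w_j>0$ (i.e. $j\le r$), it requires $a_j=\mathbb{E}[X_j\mid\text{cell}]$. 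Rotational symmetry of the uniform law on a ball gives $\mathbb{E}[X\mid X\in B(a_i,\sqrt r/2)]=a_i$, so the condition is met and $A$ is optimal given $\mathbf{w}$.

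For block (ii) I freeze the partition into the two balls, which renders $R$ linear in $\mathbf{w}$: maximizing the objective of \eqref{population sparse kmeans} becomes $\max_{\mathbf{w}}\sum_{j=1}^p w_j b_j$ with $b_j=\mathbb{E}[(X_j-\mu_j)^2]-\mathbb{E}[(X_j-a(X)_j)^2]$ the per-coordinate between-cluster sum of squares. I would show $b_j$ equals a common positive constant $\beta$ for $j\le r$ (the relevant coordinates are exchangeable) and $b_j=0$ for $j>r$ (both clusters are symmetric about $0$ there, so $\mu_j=a(X)_j=0$ and the two expectations cancel). The weight program $\max\sum_j w_j b_j$ over $\{\|\mathbf{w}\|_2\le1,\ \|\mathbf{w}\|_1\le s,\ \mathbf{w}\ge0\}$ has the soft-thresholding solution $w_j\propto(b_j-\Delta)_+$; with $b$ flat on the relevant block and $\Delta=0$ this is exactly $\mathbf{w}=r^{-1/2}(1,\dots,1,0,\dots,0)$, and I would confirm its KKT stationarity and complementary slackness (with the $\ell_1$ constraint slack, which holds once $s\ge\sqrt r$).

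The main obstacle will be block (ii): one must evaluate the between-cluster sums of squares precisely enough to see that they are flat across the relevant coordinates and exactly zero on the irrelevant ones, and then check that the candidate is a true KKT point of the linearized weight program---in particular identifying which of the $\ell_1$ and $\ell_2$ constraints are active, which is where the implicit requirement $s\ge\sqrt r$ and the unit-$\ell_2$ normalization of $\mathbf{w}$ enter. A secondary care point is the non-smoothness of $\min_{a\in A}$, handled by the tangency observation above, which guarantees the Voronoi interface is null and the objective differentiable at the candidate.
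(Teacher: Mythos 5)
Your two-block structure (centers given weights, then weights given centers) matches the paper's, but you execute both blocks quite differently. For the center block the paper does not check a first-order condition at all: it reduces to an $r$-dimensional quantization problem and invokes the Ball Packing Theorem (Theorem 4.16 of Graf and Luschgy) to conclude that $\{a_1,a_2\}$ is the \emph{globally} optimal $2$-point quantizer given $\mathbf{w}$; your centroid condition $w_j\bigl(a_{kj}-\mathbb{E}[X_j\mid\text{cell}]\bigr)=0$, verified via the Cauchy--Schwarz containment of each ball in its Voronoi half-space and rotational symmetry, is more elementary and suffices for stationarity, though it proves less. For the weight block the paper keeps the $\mathbf{w}$-dependent Voronoi partition inside the objective, notes that coordinates $j>r$ never appear, and argues from convexity and permutation symmetry that a maximizer of the form $\alpha\mathbf{1}$ exists; you instead freeze the partition, which linearizes the problem, compute the per-coordinate BCSS explicitly ($b_j=1/4$ for $j\le r$, $b_j=0$ otherwise), and read off the KKT point of the resulting linear program. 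Your route is more concrete and arguably on firmer ground, since the paper's symmetrization step is delicate (Jensen's inequality runs the wrong way when \emph{maximizing} a convex function). You also repair two things the paper glosses over: the stated $\mathbf{w}=\mathbf{1}_r$ violates $\|\mathbf{w}\|_2\le 1$ and must be normalized to $r^{-1/2}\mathbf{1}_r$, and feasibility of that point silently requires $s\ge\sqrt{r}$.

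The one step you must write out with care is the bridge from KKT of the frozen-partition linear program back to stationarity of the true objective $g(\mathbf{w})$. Nullity of the Voronoi interface at the candidate is not by itself enough; you need the Danskin-type statement that the frozen-partition objective is a linear minorant of $g$ touching at $\mathbf{w}_0$ with matching gradient, i.e.\ that the mass of the symmetric difference of the induced cells is $o(\|\mathbf{w}-\mathbf{w}_0\|)$ under perturbation of the weights. This does hold here, precisely because the two balls are tangent to the separating hyperplane so a perturbed hyperplane only shaves off a cap of higher-order volume, but it is the point a referee would press on and it deserves an explicit estimate.
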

    
We remark that since our proof technique greatly rests on the symmetry argument, it is not straightforward to extend this result to the case, where $a_2$ doesn't have the same value for the first $r$ components. 
Also, the fact that two components of uniform distribution do not share the supports plays a crucial role. In fact, if we consider the two-component normal mixture model, this conclusion no longer holds. Consider the Gaussian mixture model, $X \sim \frac{1}{2}N_p(\mu_1,\sigma^2 I_p) + \frac{1}{2}N_p(\mu_2,\sigma^2 I_p)$, where $\mu_1 = (0, \ldots, 0)^t$ and $\mu_2 = (\overbrace{\delta, \ldots, \delta}^{r}, 0, \ldots, 0)^t$ for some $\delta>0$. Given $\mathbf{w} = (\overbrace{\alpha, \ldots, \alpha}^{r}, 0, \ldots, 0)^t$, $\alpha>0$, we cannot recover $A = \{\mu_1, \mu_2\}$. This fact follows from the necessary condition of optimal quantizer \citep[Theorem 4.1]{graf:2007} as the mean of truncated normal distribution is no longer the same as $\mu_1$.
	
Second, one might question whether the consistency of clusters could be derived from risk consistency for general distances, similar to the approach used for Euclidean distance. Developing this idea requires a proper mathematical framework for partition spaces, a set of every possible partition, as well as establishing appropriate notions of distance and compactness within these spaces. However, we defer this exploration to future work.

\section{Appendix: Proofs}
\label{sec:proofs}

\subsection{Proofs of Euclidean distance} 

\medskip 
In this section, we prove the theorems and lemmas stated above.
\begin{replemma}{lem:1}
    The optimal values of (\ref{orig sparse kmeans}) and (\ref{centroid sparse kmeans}) are the same when $d_{i,i',j}=(X_{ij}-X_{i'j})^2$.
\end{replemma}
\begin{proof}
    We begin by reformulating the (\ref{orig sparse kmeans}) by specifying that the distance used is the squared Euclidean distance, $d_{i,i',j}=(X_{ij}-X_{i'j})^2$. Then the problem becomes equivalent to maximizing
\begin{equation} \label{equi1}
    \sum_{i=1}^n ||X_i-\bar{X}||_{\mathbf{w}}^2 - \sum_{k=1}^K \sum_{i \in C_k} ||X_i-\bar{X}_k||_{\mathbf{w}}^2,
\end{equation}
where $\bar{X}_k = |C_k|^{-1} \sum_{i \in C_k} X_i$.
This follows from
\begin{equation}
\begin{aligned}
    \sum_{j=1}^p w_j \sum_{i=1}^n \sum_{i'=1}^n d_{i,i',j} 
    &= \sum_{j=1}^p w_j \sum_{i=1}^n \sum_{i'=1}^n (X_{ij}-X_{i'j})^2 \\
    &= \sum_{j=1}^p w_j \sum_{i=1}^n \sum_{i'=1}^n (X_{ij}-\bar{X}_{\cdot j} + \bar{X}_{\cdot j} + X_{i'j})^2 \\
    &= \sum_{j=1}^p w_j \sum_{i=1}^n \sum_{i'=1}^n (X_{ij}-\bar{X}_{\cdot j})^2 + (\bar{X}_{\cdot j} + X_{i'j})^2 \\
    &= 2n \sum_{i=1}^n ||X_i-\bar{X}||_{\mathbf{w}}^2.
\end{aligned}
\end{equation}
Note that the decision variables of the objective (\ref{equi1}) are partitions and weight. We further claim that maximizing (\ref{equi1}) is equivalent to the maximization of (\ref{centroid sparse kmeans}), which proves the lemma.
For every feasible solution $(\mathbf{w},C_1, \ldots, C_K)$ of (\ref{orig sparse kmeans}), let $a_1=\bar{X}_1, \cdots, a_K=\bar{X}_K$, where $\bar{X}_i$ denotes the mean vector of $C_i$. 
\begin{equation*}
    \begin{aligned}
        &\sum_{i=1}^n \left(||X_i-\bar{X}||_\mathbf{w}^2 - \min_{a \in A} ||X_i-\theta||_\mathbf{w}^2 \right)\\ 
        &= \sum_{i=1}^n \left(||X_i-\bar{X}||_\mathbf{w}^2 - \min_{1 \leq j \leq K} ||X_i-\bar{X_j}||_\mathbf{w}^2 \right) \\
        &= \sum_{i=1}^n ||X_i-\bar{X}||_\mathbf{w}^2 - \sum_{k=1}^K \sum_{i \in C_k} \min_{1 \leq j \leq K} ||X_i-\bar{X_j}||_\mathbf{w}^2 \\
        &\geq \sum_{i=1}^n ||X_i-\bar{X}||_\mathbf{w}^2 - \sum_{k=1}^K \sum_{i \in C_k} ||X_i-\bar{X_k}||_\mathbf{w}^2.
    \end{aligned}
\end{equation*}
Conversely, for every feasible solution $(\mathbf{w}, a_1, \ldots, a_K)$ of (\ref{centroid sparse kmeans}), let $C_i=\{X_l:||X_l-a_i||_{\mathbf{w}} = \min_{1 \leq j \leq K}||X_l-a_j||_{\mathbf{w}}, l \in \{1,\ldots,n\}\}, \forall i \in \{1,\ldots,K\}$.
\begin{equation*}
    \begin{aligned}
        &\sum_{i=1}^n ||X_i-\bar{X}||_\mathbf{w}^2 - \sum_{k=1}^K \sum_{i \in C_k} ||X_i-\bar{X_k}||_\mathbf{w}^2\\
        &\geq \sum_{i=1}^n ||X_i-\bar{X}||_\mathbf{w}^2 - \sum_{k=1}^K \sum_{i \in C_k} ||X_i-a_k||_\mathbf{w}^2 \\
        &= \sum_{i=1}^n ||X_i-\bar{X}||_\mathbf{w}^2 - \sum_{i=1}^n \min_{1 \leq j \leq K} ||X_i-a_j||_\mathbf{w}^2
    \end{aligned}
\end{equation*}
\end{proof}

\begin{reptheorem}{thm:1}
        Under (A1), with probability at least $1-3t$,
	\[
	R(\hat{\theta}) - R(\theta^*) \le 4RC + 8sM^2\sqrt{\frac{2\log(1/t)}{n}} + 2sM^2 \frac{\log(p/t)}{n},
	\]
        where 
        \[
        RC \le \sqrt{\frac{2}{n}} sM^2 \left( \sqrt{K} + 5K \right)
        \]
\end{reptheorem}
\begin{proof}
    The result rests on the classical inequality
    \begin{equation}
    \begin{aligned}
        R(\hat{\theta}) - R(\theta^*) &\leq \sup_{\theta} (R_n(\theta) - R(\theta)) + \sup_{\theta} (R(\theta) - R_n(\theta)) + 2\sup_{\theta} |R_n(\theta) - R_n'(\theta)|  \\
        &= \sup_{\theta} (R_n(\theta) - R(\theta)) + \sup_{\theta} (R(\theta) - R_n(\theta)) + 2 ||\bar{X}-\mu||_{\bf{w}}^2
    \end{aligned}
    \end{equation}
    and bounding the empirical process $\sup_{\theta} (R_n(\theta) - R(\theta))$ via Rademacher complexity.
    \begin{equation}
        RC = \mathbb{E}  \left[ \sup_{f \in \mathcal{F}} \frac{1}{n} \sum_{i=1}^n \epsilon_i f(x_i) \right],
    \end{equation}
    where $\mathcal{F} = \{||\cdot-\mu||_{\bf{w}}^2 - \min_{a \in A} ||\cdot-a||_{\bf{w}}^2 : ||{\bf w}||_1 \leq s, ||{\bf w}||_2^2 \leq 1, A \subset \mathbb{R}^p, |A|=k \}$ and $\epsilon_i$ are idependent and identically distributed Rademacher variables. \\
    We shall apply the vector contraction theorem from \cite{maurer:2016} to show that 
    \[
    RC \lesssim \frac{1}{\sqrt{n}}.
    \]
	First, note that $||\cdot-\mu||_{\bf{w}}^2 - \min_{a \in A} ||\cdot-a||_{\bf{w}}^2 = \max_{a \in A} \{ ||\cdot-\mu||_{\bf{w}}^2 - ||\cdot-a||_{\bf{w}}^2 \}$.
        Since $(b_1, \cdots, b_K) \mapsto \max \{b_1, \cdots, b_K\}$, for $b_i \in \mathbb{R}$ is a 1-Lipschitz function with respect to the Euclidean distance, we can apply the vector-contraction inequality from \citet{maurer:2016}. 
    \begin{equation}
        \begin{aligned}
            n \times RC &\leq \sqrt{2} \mathbb{E} \sup_{w,A} \sum_{i=1}^n \sum_{k=1}^K \epsilon_{ik} \left( ||x_i-\mu||_w^2 - ||x_i-a_k||_w^2 \right) \\
            &= \sqrt{2} \mathbb{E} \sup_{w,A} \sum_{i=1}^n \sum_{k=1}^K \epsilon_{ik} \left( ||\mu||_w^2 - ||a_k||_w^2 -2\langle x_i, \mu-a_k \rangle_w \right) \\
            &\le \sqrt{2} \mathbb{E} \left[ \sup_{w,A} \sum_{i,k} \epsilon_{ik} ||\mu||_w^2 + \sup_{w,A} \sum_{i,k} \epsilon_{ik} ||a_k||_w^2 + \sup_{w,A} \sum_{i,k} 2 \epsilon_{ik} \langle x_i, \mu-a_k \rangle_w \right]\\
            &\overset{(i)}{\le} \sqrt{2} \left[sM^2\mathbb{E} \left|\sum_{i,k}\epsilon_{ik} \right| + sKM^2 \mathbb{E} \left|\sum_{i}\epsilon_{i} \right| + 4\sqrt{s}KM \mathbb{E} \left\|\sum_{i}\epsilon_{i} \sqrt{w} \odot X_i \right\|\right] \\
            &\overset{(ii)}{\le} \sqrt{2} \left( sM^2 \sqrt{nK} + sKM^2 \sqrt{n} + 4sKM\sqrt{nM^2} \right)
        \end{aligned}
    \end{equation}
    For $(i)$,
    \begin{equation}
        \begin{aligned}
            \mathbb{E} \sup_{w,A} \sum_{i,k} \epsilon_{ik} \langle x_i, \mu-a_k \rangle_w  &= \mathbb{E} \sup_{w,A} \sum_{k}  
            \left\langle \sum_{i} \sqrt{w} \odot \epsilon_{ik} x_i, \sqrt{w} \odot (\mu-a_k) \right \rangle \\
            &\le \sum_{k} \mathbb{E} \left[ \sup_{w,A} \|\sum_{i} \sqrt{w} \odot \epsilon_{ik} x_i \| \|\sqrt{w} \odot (\mu-a_k)\| \right] \\
            &\le 2KM \mathbb{E} \left\| \sum_{i}\epsilon_{i} \sqrt{w} \odot X_i \right\|,
        \end{aligned}
    \end{equation}
    where $\epsilon_i, \epsilon_{ik}$ are iid rademacher variables, $a_k$ are the elements of $A$, $\sqrt{w} \in \mathbb{R}^p$ is the square root applied to each element of $w$ and $\odot$ refers to the elementwise multiplication. 
    The inequality $(ii)$ follows from Jensen's inequality. This proves that $RC = O(\frac{1}{\sqrt{n}})$.
    This implies that the rate of the empirical process is $O(\frac{1}{\sqrt{n}})$. To be precise, with probability at least $1-t$,
    \[
    \sup_\theta (R_n(\theta) - R(\theta)) \leq 2 RC + 4 s M^2\sqrt{\frac{2\log(1/t)}{n}} = O(\frac{1}{\sqrt{n}}),
    \]
    which follows from bounded difference inequality together with standard symmetrization argument and noting that our function 
    class $\mathcal{F}$ is uniformly bounded by $4sM^2$ (for example, see Theorem 4.10 in \citet{wainwright:2019}). Similarly, with probability at least $1-t$,
    \[
    \sup_\theta (R(\theta) - R_n(\theta)) \leq 2 RC + 4 s M^2\sqrt{\frac{2\log(1/t)}{n}} = O(\frac{1}{\sqrt{n}}).
    \]
    Lastly, we bound $||\bar{X}-\mu||_{\mathbf{w}}^2$ using the Hoeffding inequality together with the union bound. As a result, it follows that with probability at least $1-t$,
\[
	||\bar{X}-\mu||_{\mathbf{w}}^2 \le \sum_{j=1}^p w_j \frac{2\log(p/t)}{n}M^2 \le 2sM^2 \frac{\log(p/t)}{n}
\]
    Putting these all together proves the theorem.
\end{proof}

\begin{reptheorem}{thm:2}
    The map
    \[
        (A, {\bf w}) \rightarrow R(A, {\bf w}) = \mathbb{E} \left[ ||X - \mu||_\mathbf{w}^2 - \min_{a \in A} ||X-a||_\mathbf{w}^2 \right]
    \]
    is continuous, where $d((A_1, {\bf w_1}), (A_2, {\bf w_2})) = \max \{ d_H(A_1, A_2), ||{\bf w_1}-{\bf w_2}|| \}$, and $d_H$ denotes Hausdorff metric between two sets.
\end{reptheorem}
\begin{proof}
        We first start by proving ``Peter-Paul" inequality.
    \begin{lemma} \label{peter paul}
        $\forall \epsilon>0, \exists c_\epsilon>0$ such that $d^2(\mathbf{x},\mathbf{y}) \leq (1+\epsilon)d^2(\mathbf{x},\mathbf{z}) + c_\epsilon d^2(\mathbf{z},\mathbf{y})$ for every metric $d$, and $\mathbf{x},\mathbf{y},\mathbf{z} \in \mathbb{R}^p$.
    \end{lemma}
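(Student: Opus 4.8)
The plan is to reduce everything to the one property of a general metric $d$ that we may rely on, namely the triangle inequality, and then to absorb the resulting cross term with a weighted arithmetic--geometric mean inequality carrying a tunable parameter. First I would write the triangle inequality $d(\mathbf{x},\mathbf{y}) \le d(\mathbf{x},\mathbf{z}) + d(\mathbf{z},\mathbf{y})$ and square both sides, which gives $d^2(\mathbf{x},\mathbf{y}) \le d^2(\mathbf{x},\mathbf{z}) + 2\,d(\mathbf{x},\mathbf{z})\,d(\mathbf{z},\mathbf{y}) + d^2(\mathbf{z},\mathbf{y})$. The entire content of the lemma is then to split the cross term $2\,d(\mathbf{x},\mathbf{z})\,d(\mathbf{z},\mathbf{y})$ so that the portion attributed to $d^2(\mathbf{x},\mathbf{z})$ is exactly $\epsilon$, leaving the remainder to be charged to $d^2(\mathbf{z},\mathbf{y})$.

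Next I would invoke Young's inequality in the form $2ab \le \epsilon\, a^2 + \epsilon^{-1} b^2$, which is just the expansion of $(\sqrt{\epsilon}\,a - b/\sqrt{\epsilon})^2 \ge 0$, applied with $a = d(\mathbf{x},\mathbf{z})$ and $b = d(\mathbf{z},\mathbf{y})$. Substituting this estimate for the cross term and collecting the coefficients of $d^2(\mathbf{x},\mathbf{z})$ and $d^2(\mathbf{z},\mathbf{y})$ produces precisely the claimed inequality, with the admissible choice $c_\epsilon = 1 + \epsilon^{-1}$.

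Because the argument uses only the triangle inequality together with a purely algebraic inequality, the constant $c_\epsilon$ depends on $\epsilon$ alone and is uniform over every metric $d$ and every triple $\mathbf{x},\mathbf{y},\mathbf{z}$, which is exactly what the statement demands. There is no genuine obstacle here; the only point needing a moment's care is the asymmetric weighting in Young's inequality, that is, pairing $\sqrt{\epsilon}\,d(\mathbf{x},\mathbf{z})$ against $d(\mathbf{z},\mathbf{y})/\sqrt{\epsilon}$ rather than using the balanced split, since it is this choice that drives the coefficient on the first squared distance all the way down to $1+\epsilon$ instead of some larger constant.
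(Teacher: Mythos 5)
Your proof is correct and follows essentially the same route as the paper: both start from the triangle inequality, square, and obtain $c_\epsilon = 1+\epsilon^{-1}$ by an elementary weighted algebraic inequality. The only cosmetic difference is that you expand the square and absorb the cross term via Young's inequality $2ab \le \epsilon a^2 + \epsilon^{-1}b^2$, whereas the paper reaches the identical bound by applying the convexity of $t \mapsto t^2$ to a weighted splitting of $d(\mathbf{x},\mathbf{z}) + d(\mathbf{z},\mathbf{y})$.
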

    \begin{proof}
    \begin{equation*}
    \begin{aligned}
        d^2(\mathbf{x}, \mathbf{y}) &\overset{(i)}{\le} \{d(\mathbf{x}, \mathbf{z})+ d(\mathbf{z}, \mathbf{y})\}^2 \\
        &= \left\{ \frac{1}{1+\epsilon} (1+\epsilon) d(\mathbf{x}, \mathbf{z}) + \frac{\epsilon}{1+\epsilon} \frac{1+\epsilon}{\epsilon}  d(\mathbf{z}, \mathbf{y}) \right\}^2 \\
        &\overset{(ii)}{\le} \frac{1}{1+\epsilon} (1+\epsilon)^2 d^2(\mathbf{x}, \mathbf{z}) + \frac{\epsilon}{1+\epsilon} \left(\frac{1+\epsilon}{\epsilon}\right)^2 d^2(\mathbf{z}, \mathbf{y}) \\
        &= (1+\epsilon) d^2(\mathbf{x}, \mathbf{z}) + \left(1+\frac{1}{\epsilon}\right) d^2(\mathbf{z}, \mathbf{y})
    \end{aligned}
    \end{equation*}
    The $(i)$ holds by triangle inequality and $(ii)$ by the convexity of $x^2$. Letting $c_\epsilon = 1+\frac{1}{\epsilon}$ proves the lemma.
\end{proof}
We can extend the above lemma to the distance between a set and a point and the distance between two sets, which is the Hausdorff distance. For this, let us define the necessary concepts.
    $$d(\mathbf{x},A) = \inf_{\mathbf{y} \in A} d(\mathbf{x}, \mathbf{y})$$
    $$\overrightarrow{d_H}(A,B) = \sup_{\mathbf{x} \in A}d(\mathbf{x}, B)$$
    $$d_H(A,B) = \max\left\{\overrightarrow{d_H}(A,B), \overrightarrow{d_H}(B,A)\right\}$$
Note that while $d$ and $d_H$ are metrics, but $\overrightarrow{d_H}$ is not a metric because it is not symmetric in general.

\begin{lemma} \label{peter paul2}
    $\forall \epsilon>0, \exists c_\epsilon >0$ such that $d^2(\mathbf{x},A) \leq (1+\epsilon)d^2(\mathbf{x},B) + c_\epsilon \overrightarrow{d_H}^2(B,A)$ for every metric d, $\mathbf{x} \in \mathbb{R}^p$, and $A,B \subset \mathbb{R}^p$.
\end{lemma}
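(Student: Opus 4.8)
The plan is to bootstrap the point-wise ``Peter-Paul'' inequality (Lemma \ref{peter paul}) to the set case, keeping careful track of the order in which I take the infimum over points of $A$ and of $B$. The essential observation is that the intermediate point appears in two places that cannot be optimized simultaneously, and this is precisely where the \emph{directed} Hausdorff distance $\overrightarrow{d_H}(B,A)$ enters the bound. Throughout, no property of $d$ beyond those already used in Lemma \ref{peter paul} is needed, so the argument is valid for an arbitrary metric and inherits the same constant $c_\epsilon = 1 + 1/\epsilon$.

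First I would fix $\epsilon > 0$ and take $c_\epsilon$ as in Lemma \ref{peter paul}. For an arbitrary $\mathbf{b} \in B$ and arbitrary $\mathbf{a} \in A$, applying Lemma \ref{peter paul} with intermediate point $\mathbf{z} = \mathbf{b}$ and endpoint $\mathbf{y} = \mathbf{a}$ gives
\[
d^2(\mathbf{x},\mathbf{a}) \le (1+\epsilon)\, d^2(\mathbf{x},\mathbf{b}) + c_\epsilon\, d^2(\mathbf{b},\mathbf{a}).
\]
Since the first term on the right does not depend on $\mathbf{a}$, taking the infimum over $\mathbf{a} \in A$ touches only the last term and yields, for every $\mathbf{b} \in B$,
\[
d^2(\mathbf{x},A) \le (1+\epsilon)\, d^2(\mathbf{x},\mathbf{b}) + c_\epsilon\, d^2(\mathbf{b},A).
\]

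Next I would dominate the $\mathbf{b}$-dependent tail term by its worst case over $B$, namely $d^2(\mathbf{b},A) \le \sup_{\mathbf{b}' \in B} d^2(\mathbf{b}',A) = \overrightarrow{d_H}^2(B,A)$. This decouples the two occurrences of $\mathbf{b}$: the resulting bound
\[
d^2(\mathbf{x},A) \le (1+\epsilon)\, d^2(\mathbf{x},\mathbf{b}) + c_\epsilon\, \overrightarrow{d_H}^2(B,A)
\]
holds for every $\mathbf{b} \in B$ with a second term that is now constant in $\mathbf{b}$. Taking the infimum over $\mathbf{b} \in B$ on the right replaces $\inf_{\mathbf{b} \in B} d^2(\mathbf{x},\mathbf{b})$ by $d^2(\mathbf{x},B)$ and produces exactly the claimed inequality.

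The only point requiring care — which I would flag as a subtlety of ordering rather than a genuine obstacle — is that one must infimize over $A$ first while $\mathbf{b}$ is held fixed, and then bound $d^2(\mathbf{b},A)$ uniformly by $\overrightarrow{d_H}^2(B,A)$ before infimizing over $B$; the two terms share the variable $\mathbf{b}$ and cannot both be driven to their minimum at once. Replacing $d^2(\mathbf{b},A)$ by the supremum is exactly what renders the second term independent of $\mathbf{b}$ and so legitimizes the final infimum, giving the asymmetric appearance of $\overrightarrow{d_H}(B,A)$ (rather than the symmetric $d_H(A,B)$) in the statement.
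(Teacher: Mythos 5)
Your argument is correct and coincides with the paper's own proof: both apply the pointwise Peter--Paul inequality, take the infimum over $\mathbf{a} \in A$ first, dominate $d^2(\mathbf{b},A)$ by $\sup_{\mathbf{b}' \in B} d^2(\mathbf{b}',A) = \overrightarrow{d_H}^2(B,A)$ to decouple the two occurrences of $\mathbf{b}$, and only then infimize over $\mathbf{b} \in B$. Your explicit remark about the ordering of the infima is exactly the (implicit) point of care in the paper's version; there is nothing to add.
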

\begin{proof}
    By Lemma \ref{peter paul}, $\exists c_\epsilon$ such that $\forall \mathbf{y} \in A, \forall \mathbf{z} \in B$, 
    \[
        d^2(\mathbf{x}, \mathbf{y}) \leq (1+\epsilon)d^2(\mathbf{x}, \mathbf{z}) + c_\epsilon d^2(\mathbf{z}, \mathbf{y}).
    \]
    Taking infimum over $\mathbf{y} \in A$ yields
    \[
        d^2(\mathbf{x}, A) \leq (1+\epsilon)d^2(\mathbf{x}, \mathbf{z}) + c_\epsilon d^2(\mathbf{z}, A).
    \]
    Finally, taking infimum again for $\mathbf{z} \in B$ gives 
    \begin{equation*}
    \begin{aligned}
        d^2(\mathbf{x}, A) &\le \inf_{\mathbf{z} \in B} \left\{(1+\epsilon)d^2(\mathbf{x}, \mathbf{z}) + c_\epsilon d^2(\mathbf{z}, A) \right\} \\
        &\le \inf_{\mathbf{z} \in B} \left\{(1+\epsilon)d^2(\mathbf{x}, \mathbf{z}) + c_\epsilon \sup_{\mathbf{z} \in B} d^2(\mathbf{z}, A) \right\} \\
        &= (1+\epsilon)d^2(\mathbf{x}, B) + c_\epsilon \overrightarrow{d_H}^2(B,A)
    \end{aligned}
    \end{equation*}
    and this completes the proof.
\end{proof}

\begin{lemma} \label{continuity of distance}
    $d_{\mathbf{w}_n}(x,y) \rightarrow d_{\mathbf{w}}(x,y)$ as $\mathbf{w}_n \rightarrow \mathbf{w}$, for every $x, y \in \mathbb{R}^p$. Furthermore, $d_{\mathbf{w}_n}(x,A) \rightarrow d_{\mathbf{w}}(x,A)$ as $\mathbf{w}_n \rightarrow \mathbf{w}$, for every $x \in \mathbb{R}^p$ and $A \subset \mathbb{R}^p$ such that $|A|<\infty$.
\end{lemma}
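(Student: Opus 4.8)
The plan is to reduce both assertions to the continuity of the \emph{squared} weighted distance, which is simply a linear function of the weight vector, and then to invoke continuity of the square root and of the finite minimum. This avoids any reverse-triangle-inequality manipulation, which would be awkward here since $d_{\mathbf{w}_n}$ and $d_{\mathbf{w}}$ are seminorms attached to \emph{different} weights.

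First I would establish the pointwise statement. Fix $x,y \in \mathbb{R}^p$ and set $v_j = (x_j - y_j)^2 \ge 0$, so that $d_{\mathbf{w}}^2(x,y) = \sum_{j=1}^p w_j v_j = \langle \mathbf{w}, \mathbf{v} \rangle$. This is a linear, hence continuous, function of $\mathbf{w}$; explicitly, by Cauchy--Schwarz,
\[
\left| d_{\mathbf{w}_n}^2(x,y) - d_{\mathbf{w}}^2(x,y) \right| = \left| \langle \mathbf{w}_n - \mathbf{w}, \mathbf{v} \rangle \right| \le \|\mathbf{w}_n - \mathbf{w}\|\,\|\mathbf{v}\| \to 0.
\]
Since $w_j \ge 0$ on the feasible region, the argument of the square root remains in $[0,\infty)$, where $t \mapsto \sqrt{t}$ is continuous. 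Composing the two continuous maps gives $d_{\mathbf{w}_n}(x,y) = \sqrt{d_{\mathbf{w}_n}^2(x,y)} \to \sqrt{d_{\mathbf{w}}^2(x,y)} = d_{\mathbf{w}}(x,y)$, which is the first claim.

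For the second claim I would exploit the finiteness of $A$. Writing $A = \{a_1, \ldots, a_m\}$, the infimum defining $d_{\mathbf{w}}(x,A) = \inf_{a \in A} d_{\mathbf{w}}(x,a)$ is attained and equals the finite minimum $\min_{1 \le i \le m} d_{\mathbf{w}}(x,a_i)$. Each map $\mathbf{w} \mapsto d_{\mathbf{w}}(x,a_i)$ is continuous by the first part, and the pointwise minimum of finitely many continuous functions is again continuous. Hence $\mathbf{w} \mapsto d_{\mathbf{w}}(x,A)$ is continuous, which yields $d_{\mathbf{w}_n}(x,A) \to d_{\mathbf{w}}(x,A)$.

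This argument is essentially routine, so there is no serious obstacle; the only points requiring slight care are that the square root is merely continuous, not Lipschitz, at $0$ (this is precisely why I pass through the squared distance and appeal to continuity of $\sqrt{\cdot}$ rather than attempting a direct Lipschitz bound on $d_{\mathbf{w}}$ itself), and that the hypothesis $|A| < \infty$ is exactly what converts the infimum into a finite minimum so that continuity is preserved. Without finiteness one would have to supply an additional uniformity argument to pass the limit through the infimum, so I would flag that the statement as written genuinely uses $|A|<\infty$.
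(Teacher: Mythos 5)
Your proof is correct and follows essentially the same route as the paper: the first assertion is the continuity (indeed linearity) of $\mathbf{w}\mapsto d_{\mathbf{w}}^2(x,y)$ composed with the square root, and the second uses finiteness of $A$ to turn the infimum into a minimum of finitely many continuous functions, which is exactly what the paper's bound $|\min_{a}d_{\mathbf{w}}(x,a)-\min_{a}d_{\mathbf{w}_n}(x,a)|\le\max_{a}|d_{\mathbf{w}}(x,a)-d_{\mathbf{w}_n}(x,a)|$ encodes. No gaps.
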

\begin{proof}
    The first assertion is immediate from its definition. For the second one, note that the finiteness of $|A|$ implies
    \begin{equation*}
        \max_{a \in A} \left|d_{\mathbf{w}_n}(x,a) - d_\mathbf{w}(x,a)\right| \rightarrow 0.
    \end{equation*}
    Then,
    \begin{equation*}
    \begin{aligned}
        \min_{a \in A} d_\mathbf{w}(x,a) &= \min_{a \in A} \left\{ d_\mathbf{w}(x,a) - d_{\mathbf{w}_n}(x,a) + d_{\mathbf{w}_n}(x,a) \right\} \\
        &\le \min_{a \in A} d_{\mathbf{w}_n}(x,a) + \max_{a \in A} \left\{ d_\mathbf{w}(x,a) - d_{\mathbf{w}_n}(x,a) \right\} \\
        &\le \min_{a \in A} d_{\mathbf{w}_n}(x,a) + \max_{a \in A} \left| d_\mathbf{w}(x,a) - d_{\mathbf{w}_n}(x,a) \right|.
    \end{aligned}
    \end{equation*}
    With the role of $d_\mathbf{w}$ and $d_{\mathbf{w}_n}$ reversed,
    \[
        \left| \min_{a \in A} d_\mathbf{w}(x,a) - \min_{a \in A} d_{\mathbf{w}_n}(x,a) \right| \le \max_{a \in A} \left|d_{\mathbf{w}_n}(x,a) - d_\mathbf{w}(x,a)\right|,
    \]
    and this completes the proof.
\end{proof}
Now, we are ready to prove the continuity.
    Suppose $(A_n, \mathbf{w}_n) \rightarrow (A, \mathbf{w})$ in $d_H \times d$ where $d_H$ denotes Hausdorff distance and $d$ denotes standard $p$-dimensional Euclidean distance. Our goal is to show that $R(A_n, \mathbf{w}_n) \rightarrow R(A, \mathbf{w})$.
    \begin{equation*}
    \begin{aligned}
        R(A, \mathbf{w}) &= \int \left\{ d^2_\mathbf{w}(x,\mu) - d^2_\mathbf{w}(x,A) \right\} d\mathbb{P}(x)\\
        &= \sum_{l=1}^p w_l Var(X_l) - \int d^2_\mathbf{w}(x,A)d\mathbb{P}(x)    
    \end{aligned}
    \end{equation*}
    Since $\mathbf{w}_n \rightarrow \mathbf{w}$ in $d$, it is clear that the first term of $R(A_n, \mathbf{w}_n)$ converges to that of $R(A, \mathbf{w})$. Thus, it remains to show that $\int d_{\mathbf{w}_n}^2(x,A_n) d\mathbb{P}(x) \rightarrow \int d_\mathbf{w}^2(x,A) d\mathbb{P}(x)$ as $n \rightarrow \infty$.

    For every $\epsilon>0$, pick $c_{\epsilon}$ in Lemma \ref{peter paul2} such that 
    \begin{equation} \label{peter paul3} 
        d^2(x, A) \leq (1+\epsilon) d^2(x, B) + c_\epsilon \overrightarrow{d_H}^2(B, A).
    \end{equation}
    Now, let $d, A, B$ be $d_{\mathbf{w}_n}, A_n, A$ respectively and then integrate with respect to the measure $\mathbb{P}$ which yields
    \begin{equation} \label{peter paul int}
        \int d_{\mathbf{w}_n}^2(x, A_n) d\mathbb{P}(x) \le (1+\epsilon) \int d_{\mathbf{w}_n}^2(x, A) d\mathbb{P}(x) + c_\epsilon \overrightarrow{d_{H, \mathbf{w}_n}}^2(A, A_n).
    \end{equation}
    As $n \rightarrow \infty$, $\overrightarrow{d_{H, \mathbf{w}_n}}^2(A, A_n) \rightarrow 0$ because
    \[
    \overrightarrow{d_{H, \mathbf{w}_n}}^2(A, A_n) \le  \overrightarrow{d_{H, s \mathbbm{1}}}^2(A, A_n) = s^2 \overrightarrow{d_H}^2(A, A_n) \rightarrow 0.
    \]
    Taking $\limsup_{n \rightarrow \infty}$ at \eqref{peter paul int}, one gets
    \begin{equation} \label{limsup cont}
    \begin{aligned} 
        \limsup_{n \rightarrow \infty} \int d_{\mathbf{w}_n}^2(x, A_n) d\mathbb{P}(x)&\le (1+\epsilon) \limsup_{n \rightarrow \infty} \int d_{\mathbf{w}_n}^2(x, A) d\mathbb{P}(x) \\
        &\le (1+\epsilon) \int \limsup_{n \rightarrow \infty} d_{\mathbf{w}_n}^2(x,A) d\mathbb{P}(x) \\
        &= (1+\epsilon) \int d_\mathbf{w}^2(x, A) d\mathbb{P}(x),
    \end{aligned}
    \end{equation}
    where we used the reverse Fatou's lemma for the second inequality. To check the condition for the lemma to hold, note that $d^2_{\mathbf{w}_n}(x,A)$ is always bounded by the integrable function $d^2_{\mathbbm{1}}(x,A)=d^2(x,A)$. The last equality follows from  Lemma \ref{continuity of distance}. Conversely, 
    \begin{equation} \label{liminf cont}
    \begin{aligned} 
        \int d_\mathbf{w}^2(x, A) d\mathbb{P}(x)&= \int \lim_{n \rightarrow \infty} d_{\mathbf{w}_n}^2(x, A)d\mathbb{P}(x)\\
        &\le (1+\epsilon) \int \liminf_{n \rightarrow \infty} d_{\mathbf{w}_n}^2(x,A_n) d\mathbb{P}(x) \\
        &\le (1+\epsilon) \liminf_{n \rightarrow \infty} \int d_{\mathbf{w}_n}^2(x, A_n) d\mathbb{P}(x),
    \end{aligned}
    \end{equation}
     where we used \eqref{peter paul3} for the first inequality and Fatou's lemma for the last inequality. 
    Since $\epsilon$ was arbitrary, we can get rid of it at \eqref{limsup cont} and \eqref{liminf cont}, and this completes the proof.
\end{proof}

\begin{reptheorem}{thm:3}
        Let $X$ be a random vector taking values in $\mathbb{R}^p$ that follows a uniform distribution on $\bigcup_{i=1}^2 B(a_i, \sqrt{r}/2)$, where $a_1 = (0, \ldots, 0)^t$ and $a_2 = (\overbrace{1, \ldots, 1}^{r}, 0, \ldots, 0)^t$. Then, $\mathbf{w} = (\overbrace{1, \ldots, 1}^{r}, 0, \ldots, 0)^t$ and $A = \{a_1, a_2\}$ is a stationary point to (\ref{population sparse kmeans}).
\end{reptheorem}
\begin{proof}
    First, fix $\mathbf{w} = (1, \ldots, 1, 0, \ldots, 0)^t$. \\
    Then the problem boils down to the $s$-dimensional problem as
    \[
        \max_{A' \subset \mathbb{R}^s} \quad \mathbb{E}\left[||X_1-\mu||^2 - \min_{a \in A'}||X_1-a||^2\right],
    \]
    where $X=(X_1^t, X_2^t)^t$ and $X_1$ is an $s$-dimensional random vector. Now the problem is equivalent to 
    \[
        \min_{A' \subset \mathbb{R}^s} \quad \mathbb{E}\left[\min_{a \in A'}||X_1-a||^2\right],
    \]
    which is a standard form arising in vector quantization \cite{graf:2007}.
    By Theorem 4.16 (Ball packing theorem) of \cite{graf:2007}, $A'=\{a_{11}, a_{21}\}$, where $a_1=(a_{11}^t, a_{12}^t)^t$ and $a_2=(a_{21}^t, a_{22}^t)^t$ is the optimal solution. This shows that $A=\{a_1, a_2\}$ is optimal to (\ref{population sparse kmeans}) holding $\bf w$ fixed. 
    
    Conversely, fix $A=\{a_1, a_2\}$. The objective function at (\ref{population sparse kmeans}) is expressed as
    \begin{equation*}
        \begin{aligned}
            &\quad \sum_{l=1}^p w_l Var(X_l) - \int_{\Omega_1} ||x-a_1||_{\mathbf{w}}^2 dP(x) - \int_{\Omega_1^c} ||x-a_2||_\mathbf{w}^2 dP(x) \\
            &= \sum_{l=1}^{s} w_l \{Var(X_l) - \int_{\Omega_1} (x_l -a_{1l})^2 dP(x) - \int_{\Omega_1^c} (x_l - a_{2l})^2 dP(x) \},
        \end{aligned}
    \end{equation*}
    where 
    \begin{equation*}
        \begin{aligned}
            \Omega_1 &= \{x \in \mathbb{R}^p : ||x-a_1||_\mathbf{w}^2 \leq ||x-a_2||_\mathbf{w}^2 \} \\
            &= \{x \in \mathbb{R}^p : \sum_{l=1}^i w_l(x_l-a_{1l})^2 \leq \sum_{l=1}^i w_l(x_l-a_{2l})^2 \} \\
            &= \{x \in \mathbb{R}^p : \sum_{l=1}^i w_l (2x_l - 1) \leq 0\}.
        \end{aligned}
    \end{equation*}    
    Since the objective function doesn't involve any term of $w_{i+1}, \cdots, w_p$, it can be inferred that the optimal solution entails $w_{i+1}=\cdots=w_p=0$. \\
    Moreover, the objective function is convex and symmetric. Let the objective function be denoted by $g({\bf w})$ holding $A$ fixed. Then,
    \begin{equation*}
        \begin{aligned}
            g(\lambda\mathbf{w_1} + (1-\lambda))\mathbf{w_2}) 
            &= \int \lambda ||x-a||_\mathbf{w_1}^2 + (1-\lambda) ||x-a||_{\mathbf{w_2}}^2 dP(x) \\
            &\hspace{1cm}- \int \min_{a \in A} \{ \lambda||x-a|| _{\mathbf{w_1}}^2 + (1-\lambda)||x-a||_\mathbf{w_2}^2 \} dP(x) \\
            &\leq \lambda g(\mathbf{w_1}) + (1-\lambda) g(\mathbf{w_2}),
        \end{aligned}
    \end{equation*}
    for all $0 < \lambda < 1$, and $g(\mathbf{w}) = g(P\mathbf{w})$ for every permutation matrix $P$. 
    Therefore, $g$ has a maximizer of the form $\alpha \mathbf{1}$ for $\alpha \geq 0$ (see Exercises 4.4 of \cite{boyd:2004}) 
    Since
    \[
    g(\alpha \mathbf{1}) = \alpha \int \bigl(||x-\mu||^2 - \min_{\theta \in \{\mu_1, \mu_2\}} ||x-\theta||^2 \bigr) dP(x) \geq 0 = g(\mathbf{0}),
    \]
    $\alpha>0$. This completes the proof.
\end{proof}

\subsection{Proofs of general (non-Euclidean) distance} 

\medskip 
\begin{reptheorem}{thm:4}
	Let $\hat{\theta}= (\hat{\bf{w}}, \{\hat{C_1}, \ldots, \hat{C_K}\})$ denote the minimizer of (\ref{orig sparse kmeans}) over $\mathcal{F} \times \Pi$, where $\mathcal{F} = \{ \mathbf{w} \in \mathbb{R}^p : ||\mathbf{w}||_2^2 \leq 1, ||\mathbf{w}||_1 \leq s, w_j \geq 0, \forall j \}$. Also denote by $\theta^* = (\mathbf{w}^*, \{C_1^*, \ldots, C_K^*\})$ the minimizer of corresponding population problem (\ref{general population sparse kmeans}) over $\mathcal{F} \times \Pi$. Assume ($\text{A1}^\prime$) and (A3). Then, with probability at least $1-4pt$, 
\begin{eqnarray}
	R(\hat{\theta}) - R(\theta^*) & \le&  2sM \sqrt{\frac{2}{n}\log(1/t)} + \frac{4sKM}{\delta^2} \left(2RC+ \sqrt{\frac{2}{n}\log(1/t)}\right)
	\nonumber\\
&& \qquad \qquad +  \frac{2sK}{\delta}\left( 2 \max_{1 \le j \le p}RC_j + M\sqrt{\frac{2}{n}\log(1/t)} \right) \nonumber
\end{eqnarray}
provided that $2RC+\sqrt{\frac{2}{n}\log{1/t}} \le \frac{\delta}{2}$, where
\[
	RC = \mathbb{E}\sup_{C \in \mathcal{P}, \mathcal{P} \in \Pi}\frac{1}{n} \left|\sum_{i=1}^n \epsilon_i \mathbb{I}(X_i \in C) \right|
\]
\[
	RC_j = \mathbb{E}\sup_{C \in \mathcal{P}, \mathcal{P} \in \Pi}\frac{1}{\lfloor n/2 \rfloor} \left| \sum_{i=1}^{\lfloor n/2 \rfloor} \epsilon_i d_j(X_i, X_{i+\lfloor n/2 \rfloor}) \mathbb{I}((X_i,X_{i+\lfloor n/2 \rfloor}) \in C^2) \right|
\]
\end{reptheorem}
\begin{proof}
As usual, we depend on the following risk bound
\[
	R(\hat{\theta})-R(\theta^*) \le 2 \sup_{\theta \in \mathcal{F} \times \Pi} |R_n(\theta)-R(\theta)|,
\]
where 
\[
	R_n(\theta) = \sum_{j=1}^p w_j \frac{1}{n-1} \left( \frac{1}{n}\sum_{i \ne i'} d_{i,i',j} - \sum_{k=1}^K \frac{1}{n_k} \sum_{i,i' \in C_k} d_{i,i',j} \right),
\]
 the properly scaled empirical risk. 
Our goal is to bound the supremum of an empirical process. First, note that for every $\theta = (\mathbf{w}, \mathcal{P})$, where $\mathbf{w} \in \mathcal{F}$ and $\mathcal{P} = \{C_1, \ldots, C_K\} \in \Pi$, 

\begin{equation*}
\begin{aligned}
|R_n(\theta) - R(\theta)| &\le \sum_{j=1}^p w_j \Bigg\{ \underbrace{\left| \frac{1}{n(n-1)} \sum_{i \ne i'}d_{i,i',j} - \mathbb{E}d_j(X_1,X_2) \right|}_{(1)_j} + \\
&\qquad \qquad \sum_{k=1}^K \underbrace{ \left| \frac{1}{n_k(n-1)} \sum_{i,i' \in C_k} d_{i,i',j} - \frac{1}{P(C_k)} \mathbb{E}\left[d_j(X_1, X_2)I\{(X_1, X_2) \in C_k^2\} \right] \right|}_{(2)_{j,k}} \Bigg\} \\
&\le \sum_{j=1}^p w_j \max_{1 \le j \le p} \left\{ (1)_j + \sum_{k=1}^K (2)_{j,k} \right\} \\
&\le s \max_{1 \le j \le p} \left\{ (1)_j + \sum_{k=1}^K (2)_{j,k} \right\}.
\end{aligned}
\end{equation*}

Now, our estimate is no longer dependent on $\mathbf{w}$. Therefore, taking supremum over possible $\theta$ gives
\begin{equation*}
\begin{aligned}
\sup_{\theta \in \mathcal{F} \times \Pi}|R_n(\theta) - R(\theta)| &\le s \max_{1 \le j \le p} \sup_{\mathcal{P} \in \Pi} \left\{ (1)_j + \sum_{k=1}^K (2)_{j,k} \right\} \\
&\le s \max_{1 \le j \le p} \left\{ (1)_j + \sum_{k=1}^K  \sup_{\mathcal{P} \in \Pi} (2)_{j,k} \right\} \\
&= s \max_{1 \le j \le p} \left\{ (1)_j + K  \sup_{C \in \mathcal{P}, \mathcal{P} \in \Pi} (2)_{j} \right\} \\
&\le s \max_{1 \le j \le p} (1)_j + sK \max_{1 \le j \le p} \sup_{C \in \mathcal{P}, \mathcal{P} \in \Pi} (2)_j,
\end{aligned}
\end{equation*}
 where the equality comes from the fact that $\sup_{\mathcal{P} \in \Pi} (2)_{j,k}$ is the same for every $k=1, \ldots, K$ as clustering is unaffected by the order of clusters. Here, we let 
\[
(2)_j = \left| \frac{1}{n_k(n-1)} \sum_{i,i' \in C} d_{i,i',j} - \frac{1}{P(C)} \mathbb{E}\left[d_j(X_1, X_2)I\{(X_1, X_2) \in C^2\} \right] \right|.
\]

The first part, $(1)_j$, which is simply the concentration of U-statistics can be handled by bounded difference inequality. With probability at least $1-2t$,
\begin{equation} \label{eqn:18}
	(1)_j \le M \sqrt{\frac{2}{n}\log(1/t)}
\end{equation}

The second part is further decomposed as
\begin{equation*}
\begin{aligned}
	\sup_{C \in \mathcal{P}, \mathcal{P} \in \Pi} (2)_j &\le \sup_{C\in \mathcal{P}, \mathcal{P} \in \Pi} \left | \frac{1}{P(C)} \right | \sup_{C \in \mathcal{P}, \mathcal{P} \in \Pi} \left| \frac{1}{n(n-1)}\sum_{i,i' \in C} d_{i,i',j} - \mathbb{E}\left[d_j(X_1, X_2)I\{(X_1, X_2) \in C^2\} \right] \right| \\
	& \phantom{{}\le \sup_{C\in \mathcal{P}, \mathcal{P} \in \Pi} \left | \frac{1}{P(C)} \right | \sup_{C \in \mathcal{P}, \mathcal{P} \in \Pi}} + \sup_{C \in \mathcal{P}, \mathcal{P} \in \Pi} \left| \frac{1}{n(n-1)} \sum_{i,i' \in C} d_{i,i',j} \right| \sup_{C \in \mathcal{P}, \mathcal{P} \in \Pi} \left| \frac{1}{n_k/n} - \frac{1}{P(C)} \right| \\
	& \le \frac{1}{\delta} \sup_{C \in \mathcal{P}, \mathcal{P} \in \Pi} \left| \frac{1}{n(n-1)}\sum_{i,i' \in C} d_{i,i',j} - \mathbb{E}\left[d_j(X_1, X_2)I\{(X_1, X_2) \in C^2\} \right] \right| \\
	& \phantom{{}\le \frac{1}{\delta} \sup_{C \in \mathcal{P}, \mathcal{P} \in \Pi} | \frac{1}{n(n-1)}\sum_{i,i' \in C} d_{i,i',j} - \mathbb{E}[d_j(X_1, X_2)I\{X}+ M \sup_{C \in \mathcal{P}, \mathcal{P} \in \Pi} \left| \frac{1}{n_k/n} - \frac{1}{P(C)} \right|,
\end{aligned}
\end{equation*}
 and we handle each two terms independently. For this, we first show a lemma useful for handling the second one. Here, $P_nf = \frac{1}{n}\sum_{i=1}^nf(X_i)$ and $Pf = \mathbb{E}[f(X_1)]$ following standard notations in empirical process theory. \\
\begin{lemma}\label{lem:5}
	Suppose $\sup_{f \in \mathcal{F}} \left| P_nf-Pf \right| \le \epsilon$ and $\sup_{f \in \mathcal{F}} Pf \ge \delta$. Then, $\sup_{f \in \mathcal{F}} \left| \frac{1}{P_nf} - \frac{1}{Pf} \right| \le \frac{2\epsilon}{\delta^2}$, provided that $\delta \ge 2\epsilon$.
\end{lemma}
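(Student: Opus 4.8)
The plan is to prove the bound pointwise in $f$ and then pass to the supremum, since the target $2\epsilon/\delta^2$ does not depend on $f$. Fix any $f \in \mathcal{F}$. The reading of the hypothesis that is actually used downstream through assumption (A3) (where every $P(C_k) \ge \delta$) is that $Pf \ge \delta$ for all $f$, so in particular $Pf > 0$. Once I also establish $P_n f > 0$, I would rewrite the difference of reciprocals over a common denominator,
\[
\left| \frac{1}{P_n f} - \frac{1}{Pf} \right| = \frac{|Pf - P_n f|}{P_n f \cdot Pf},
\]
and then control numerator and denominator separately.

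The numerator is immediate: the first hypothesis gives $|P_n f - Pf| \le \epsilon$ uniformly in $f$. The only real obstacle is the denominator, specifically producing a lower bound on $P_n f$ (the factor $Pf \ge \delta$ being handed to us directly). This is exactly where the side condition $\delta \ge 2\epsilon$ enters: from $|P_n f - Pf| \le \epsilon$ I get
\[
P_n f \ge Pf - \epsilon \ge \delta - \epsilon \ge \delta - \frac{\delta}{2} = \frac{\delta}{2} > 0,
\]
which both justifies the common-denominator rewriting above and bounds $P_n f$ away from zero.

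Combining the two estimates yields, for every $f$,
\[
\left| \frac{1}{P_n f} - \frac{1}{Pf} \right| = \frac{|Pf - P_n f|}{P_n f \cdot Pf} \le \frac{\epsilon}{(\delta/2)\cdot \delta} = \frac{2\epsilon}{\delta^2},
\]
and since this bound is uniform in $f$, taking the supremum over $f \in \mathcal{F}$ gives the claim. The crux is precisely that $\delta \ge 2\epsilon$ keeps $P_n f$ uniformly bounded below by $\delta/2$; without it the empirical quantity $P_n f$ could approach or cross zero and its reciprocal would be uncontrolled, so I expect that single lower-bound step to be the substantive part of the argument, the rest being routine.
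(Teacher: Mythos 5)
Your proof is correct and follows essentially the same route as the paper's: write the difference of reciprocals over a common denominator, bound the numerator by $\epsilon$, and use $P_nf \ge Pf - \epsilon \ge \delta - \epsilon \ge \delta/2$ (which is exactly where $\delta \ge 2\epsilon$ enters) to control the denominator. Your remark that the hypothesis $\sup_{f}Pf \ge \delta$ must be read as an infimum (i.e., $Pf \ge \delta$ for every $f$) is the correct reading and is indeed how the paper uses the lemma.
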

\begin{proof}
	$\forall f \in \mathcal{F}$, 
	\begin{equation*}
	\begin{aligned}
		\left | \frac{1}{P_nf} - \frac{1}{Pf} \right| &= \frac{|P_nf-Pf|}{P_nf \cdot Pf} \\
		&\le \frac{\epsilon}{(Pf-\epsilon)Pf} \\
		&\le \frac{\epsilon}{(\delta-\epsilon)\delta} \le \frac{2\epsilon}{\delta^2}
	\end{aligned}
	\end{equation*}
\end{proof}
Note that by bounded difference inequality together with standard symmetrization argument (see Theorem 4.10 in \cite{wainwright:2019}), with probability at least $1-t$, 
\[
	\sup_{C \in \mathcal{P}, \mathcal{P} \in \Pi} \left| \frac{n_k}{n} - P(C) \right| \le 2RC + \sqrt{\frac{2}{n}\log{1/t}}
\]
Applying  Lemma \ref{lem:5} with $\epsilon$ equal to the RHS, it follows that with the same probability, 
\begin{equation}
	\sup_{C \in \mathcal{P}, \mathcal{P} \in \Pi} \left| \frac{1}{n_k/n} - \frac{1}{P(C)} \right| \le \frac{2}{\delta^2}\left( 2RC + \sqrt{\frac{2}{n}\log{1/t}} \right)
\end{equation}
since our assumption $2RC+\sqrt{\frac{2}{n}\log{1/t}} \le \frac{\delta}{2}$ guarantees the condition $\delta \ge 2\epsilon$ in Lemma \ref{lem:5}. \\
For the first term, we bound it using Lemma 6 from \cite{clemencon:2014}. With probability at least $1-t$, 
\[
	\sup_{C \in \mathcal{P}, \mathcal{P} \in \Pi} \left| \frac{1}{n(n-1)}\sum_{i,i' \in C} d_{i,i',j} - \mathbb{E}\left[d_j(X_1, X_2)I\{(X_1, X_2) \in C^2\} \right] \right| \le 2 RC_j + M \sqrt{\frac{2}{n}\log{1/t}}.
\]
Thus, with probability at least $1-2t$, 
\[
	\sup_{C \in \mathcal{P}, \mathcal{P} \in \Pi} (2)_j \le \frac{1}{\delta}\left(  2 RC_j + M \sqrt{\frac{2}{n}\log{1/t}} \right) + \frac{2M}{\delta^2} \left( 2RC + \sqrt{\frac{2}{n}\log{1/t}} \right).
\]

Therefore, with probability at least $1-2pt$, 
\[
	\max_{1 \le j \le p} (1)_j \le M \sqrt{\frac{2}{n}\log(1/t)}
\]
and with probability at least $1-2pt$,
\[
	\max_{1 \le j \le p} \sup_{C \in \mathcal{P}, \mathcal{P} \in \Pi} (2)_j \le \frac{1}{\delta}\left(  2 \max_{1 \le j \le p}RC_j + M \sqrt{\frac{2}{n}\log{1/t}} \right) + \frac{2M}{\delta^2} \left( 2RC + \sqrt{\frac{2}{n}\log{1/t}} \right)
\]
Putting these all together yields the theorem.
\end{proof}

\bibliographystyle{apalike}
\bibliography{ref}

\begin{thebibliography}{}

\bibitem[Bartlett et~al., 1998]{bartlett:1998}
Bartlett, P.~L., Linder, T., and Lugosi, G. (1998).
\newblock The minimax distortion redundancy in empirical quantizer design.
\newblock {\em IEEE Transactions on Information Theory}, 44(5):1802--1813.

\bibitem[Biau et~al., 2008]{Biau:2008}
Biau, G., Devroye, L., and Lugosi, G. (2008).
\newblock On the performance of clustering in {H}ilbert spaces.
\newblock {\em IEEE Transactions on Information Theory}, 54(2):781--790.

\bibitem[Boyd and Vandenberghe, 2004]{boyd:2004}
Boyd, S.~P. and Vandenberghe, L. (2004).
\newblock {\em Convex Optimization}.
\newblock Cambridge University Press, New York.

\bibitem[Chakraborty and Das, 2020]{chakraborty:2020}
Chakraborty, S. and Das, S. (2020).
\newblock Detecting meaningful clusters from high-dimensional data: A strongly
  consistent sparse center-based clustering approach.
\newblock {\em IEEE Transactions on Pattern Analysis and Machine Intelligence},
  44(6):2894--2908.

\bibitem[Chang et~al., 2018]{chang:2018}
Chang, X., Wang, Y., Li, R., and Xu, Z. (2018).
\newblock Sparse k-means with {$\ell \infty / \ell 0$} penalty for
  high-dimensional data clustering.
\newblock {\em Statistica Sinica}, 28(3):1265--1284.

\bibitem[Cl\'{e}men\c{c}on, 2014]{clemencon:2014}
Cl\'{e}men\c{c}on, S. (2014).
\newblock A statistical view of clustering performance through the theory of
  {$U$}-processes.
\newblock {\em Journal of Multivariate Analysis}, 124:42--56.

\bibitem[Devroye et~al., 2013]{devroye:2013}
Devroye, L., Gy{\"o}rfi, L., and Lugosi, G. (2013).
\newblock {\em A Probabilistic Theory of Pattern Recognition}.
\newblock Springer Science \& Business Media, New York.

\bibitem[Evans and Jaffe, 2024]{evans:2024}
Evans, S.~N. and Jaffe, A.~Q. (2024).
\newblock {Limit theorems for Fr{\'e}chet mean sets}.
\newblock {\em Bernoulli}, 30(1):419--447.

\bibitem[Graf and Luschgy, 2007]{graf:2007}
Graf, S. and Luschgy, H. (2007).
\newblock {\em Foundations of Quantization for Probability Distributions}.
\newblock Springer, New York.

\bibitem[Kohler and Lucchi, 2017]{kohler:2017}
Kohler, J.~M. and Lucchi, A. (2017).
\newblock Sub-sampled cubic regularization for non-convex optimization.
\newblock In {\em International Conference on Machine Learning}, volume~70,
  pages 1895--1904.

\bibitem[Levrard, 2013]{levrard:2013}
Levrard, C. (2013).
\newblock Fast rates for empirical vector quantization.
\newblock {\em Electronic Journal of Statistics}, 7:1716--1746.

\bibitem[Li and Liu, 2021]{li:2021}
Li, S. and Liu, Y. (2021).
\newblock Sharper generalization bounds for clustering.
\newblock In {\em International Conference on Machine Learning}, pages
  6392--6402.

\bibitem[Maurer, 2016]{maurer:2016}
Maurer, A. (2016).
\newblock A vector-contraction inequality for rademacher complexities.
\newblock In {\em Algorithmic Learning Theory}, pages 3--17.

\bibitem[Pollard, 1981]{pollard:1981}
Pollard, D. (1981).
\newblock Strong consistency of {$K$}-means clustering.
\newblock {\em The Annals of Statistics}, 9(1):135--140.

\bibitem[Rudin, 1964]{rudin:1964}
Rudin, W. (1964).
\newblock {\em Principles of Mathematical Analysis}.
\newblock McGraw-Hill, New York.

\bibitem[Wainwright, 2019]{wainwright:2019}
Wainwright, M.~J. (2019).
\newblock {\em High-Dimensional Statistics: A Non-Asymptotic Viewpoint}.
\newblock Cambridge University Press, New York.

\bibitem[Witten and Tibshirani, 2010]{witten:2010}
Witten, D.~M. and Tibshirani, R. (2010).
\newblock A framework for feature selection in clustering.
\newblock {\em Journal of the American Statistical Association},
  105(490):713--726.

\end{thebibliography}
\end{document}